\documentclass[10pt]{article}
\usepackage{amscd}
\usepackage{amsfonts}
\usepackage{amsmath}
\usepackage{amssymb}
\usepackage{amsthm}
\usepackage{bbm}
\usepackage{CJK}
\usepackage{fancyhdr}
\usepackage{graphicx}
\usepackage{hyperref}
\usepackage{indentfirst}
\usepackage{latexsym}
\usepackage{mathrsfs}
\usepackage{xypic}
\usepackage[all]{xy}           %xypic macro for latex2.09
\usepackage{amsfonts,latexsym}
\usepackage{xspace}
\usepackage{epsfig}
\usepackage{float}
\usepackage{txfonts}
\usepackage{amssymb}
\usepackage{graphicx}
\usepackage{amsmath}
\usepackage{xcolor}
\usepackage[top=1in,bottom=1in,left=1.25in,right=1.25in]{geometry}
\textwidth 15cm \textheight 22cm \oddsidemargin 0in

\raggedbottom
\hfuzz3pt
\usepackage{epsf,graphicx,epsfig}
\usepackage{amsmath,latexsym,amssymb,amsthm}
\usepackage[nospace,noadjust]{cite}
\usepackage{textcomp}
\usepackage{setspace,cite}
\usepackage{stmaryrd}
\usepackage{tikz-cd} 
\usepackage{tikz}
\usepackage{cancel}
\usetikzlibrary{calc,shapes,arrows,decorations.markings,decorations.pathmorphing}
%\usepackage[hmarginratio=1:1, vmarginratio =5:6,
%textheight=22cm,bindingoffset=1.6cm, textwidth=14.6cm]{geometry}

\usepackage{color}
\usepackage{url}
\usepackage{enumerate}
\usepackage[mathscr]{euscript}
%\usepackage{showkeys}
%\input xy
%\xyoption{all}

\swapnumbers
    \newtheorem{thm}{Theorem}[section]
    \newtheorem{prop}[thm]{Proposition}
    
    \newtheorem*{Proof*}{Proof}

    \newtheorem{subsec}[thm]{}
\theoremstyle{Definition}
    \newtheorem{Def}[thm]{Definition}
        \newtheorem{Rem}[thm]{Remark}
    \newtheorem{Exam}[thm]{Example}

    \newtheorem{notation}[thm]{Notation}

\theoremstyle{remark}

\renewcommand{\ker}{\operatorname{Ker}}

\usepackage{amssymb}

\usepackage{hyperref}
\hypersetup{
	colorlinks,
	citecolor=blue,
	filecolor=black,
	linkcolor=blue,
	urlcolor=black
}
\tikzset{
  curve/.style={
    settings={#1},
    to path={
      (\tikztostart)
      .. controls ($(\tikztostart)!\pv{pos}!(\tikztotarget)!\pv{height}!270:(\tikztotarget)$)
      and ($(\tikztostart)!1-\pv{pos}!(\tikztotarget)!\pv{height}!270:(\tikztotarget)$)
      .. (\tikztotarget)\tikztonodes
    },
  },
  settings/.code={%
    \tikzset{quiver/.cd,#1}%
    \def\pv##1{\pgfkeysvalueof{/tikz/quiver/##1}}%
  },
  quiver/.cd,
  pos/.initial=0.35,
  height/.initial=0,
}

\date{}
\begin{document}
\renewcommand{\baselinestretch}{1.2}
\renewcommand{\arraystretch}{1.0}
\title{\bf Cohomology and Extensions of $C_p$-Green Functors of Lie Type}
%\title{\bf Equivariant cohomology of Lie algebras using Green functors}
\date{}
\author{{\bf Tarik Anowar$^{1}$, Satyendra Kumar Mishra$^{2}$, Ripan Saha$^{3}$\footnote
        { Corresponding author:~~Email: ripanjumaths@gmail.com}}\\
{\small 1.  Department of Mathematics, Raiganj University, Raiganj 733134, West Bengal, India}\\
{\small 2. Department of Mathematical Sciences, Indian Institute of Technology (BHU), Varanasi-221005, India}\\
{\small 3. Department of Mathematics, Raiganj University, Raiganj 733134, West Bengal, India}
}
 \maketitle
\begin{center}
\begin{minipage}{12.cm}
\begin{center}{\bf ABSTRACT}\end{center}
We develop a theory of \(C_p\)-Green functors of Lie type, unifying the axiomatic framework of Green functors with the structure of Lie algebras under the action of a cyclic group \(C_p\) of prime order. Extending classical notions from representation theory and topology, we define tensor and exterior products, introduce an equivariant Chevalley–Eilenberg cohomology, and construct cup products that endow the cohomology with a graded Green functor of Lie type structure. A key result establishes a correspondence between equivalence classes of singular extensions and second cohomology groups, generalizing classical Lie algebra extension theory to the equivariant setting. This framework enriches the toolkit for studying equivariant algebraic structures and paves the way for further applications in deformation theory, homotopical algebra, and representation theory.

\medskip

{\bf Key words}:  Mackey functor,  Green functor,  Lie algebra, Cohomology, Singular extension, Equivariant cohomology.
\medskip

 {\bf Mathematics Subject Classification:} 17B10, 17B56, 17B55, 17B99.
\end{minipage}
\end{center}
\normalsize\vskip0.5cm
 \section{Introduction}
Mackey functors originated with George Mackey’s work on group representations, formalizing systems of modules over subgroups connected by restriction and induction maps. James A. Green \cite{Gr} expanded this framework by introducing multiplicative structures (Green functors) to study modular representations, particularly vertices and sources of indecomposable modules. Andreas Dress \cite{Dr} axiomatized Mackey functors in the 1970s, linking them to Burnside rings and algebraic K-theory. In the 1980s, Lewis, May, and McClure \cite{LMM} embedded Green functors into equivariant topology, where they became central to understanding equivariant cohomology theories like Bredon cohomology \cite{Br}. In 1990, Thévenaz and Webb \cite{Th} develop cohomology theories for Mackey functors, connecting them to derived functors and subgroup lattices. Modern advancements extend these ideas to derived categories, tensor-triangular geometry, and motivic homotopy theory, cementing Mackey and Green functors as foundational tools in representation theory, topology, and equivariant mathematics.

The study of equivariant cohomology provides a powerful framework for understanding algebraic structures endowed with group actions, bridging representation theory, homological algebra, and topology. Central to this framework are Mackey and Green functors, which systematically encode algebraic data across subgroups of a finite group while respecting restriction, transfer, and conjugation maps. These functors have found profound applications in equivariant topology\cite{LMM} and modular representation theory. In \cite{Ko}, the author studied an aspect of equivariant associative algebras using Mackey functor and the notion of oriented group action. In this paper, we extend these ideas to the setting of Lie algebras, introducing and analyzing \emph{\(C_p\)-Green functors of Lie type}---a novel structure that combines the axiomatic framework of Green functors with the algebraic richness of Lie theory, particularly under the action of a cyclic group \(C_p\) of prime order. In this paper, we only deal with the cyclic group \( C_p \) of prime order \( p \), as such groups have only two subgroups, making the resulting structure simpler and more tractable. This serves as a natural first step toward the study of general \( G \)-Green functors of Lie type and the development of a full-fledged cohomology theory.

Lie algebras equipped with group actions arise naturally in deformation theory, physics, and geometry. However, their equivariant cohomology, especially in the context of Mackey functors, remains underexplored. Our work aims to fill this gap by developing a categorical and cohomological theory for Lie-type Green functors equivariant under \(C_p\). Specifically, we establish how the interplay between Lie brackets and the functorial operations (restriction, transfer, conjugation) enriches both the structure theory and cohomological invariants of these objects.

The paper begins by recalling foundational concepts, including Mackey functors, Green functors, and their module categories. We then define \(C_p\)-Green functors of Lie type, requiring that the evaluations at the trivial subgroup \(e\) and the group \(C_p\) themselves carry Lie algebra structures, compatible with the functorial maps. Morphisms in this category are pairs of Lie algebra homomorphisms that commute with the restriction, transfer, and conjugation maps, ensuring coherence across subgroups.

Key constructions include the tensor and exterior products of \(C_p\)-Green functors of Lie type. These operations are carefully designed to respect the Mackey functor axioms while preserving antisymmetry and Jacobi identities. We further introduce the equivariant Hom complex, \(C_p\text{-}\mathrm{HOM}(\wedge^n L_1, L_2)\), which generalizes alternating multilinear maps to the Mackey functor setting.

Building on this, we define the \emph{Chevalley-Eilenberg cohomology} for \(C_p\)-Green functors of Lie type, demonstrating that it naturally inherits the structure of a Mackey functor. A central result is the classification of singular extensions of \(C_p\)-Green functors of Lie type by bimodules, establishing a one-to-one correspondence between equivalence classes of such extensions and the second Chevalley-Eilenberg cohomology group \(H^2_{\mathrm{Lie}_{C_p}}(L, M)\).

The paper is structured as follows: In Section \ref{sec2}, we recall Mackey and Green functors, the notion of pairing. In Section \ref{sec3}, we introduce \(C_p\)-Green functors of Lie type and their morphisms. In Section \ref{sec4}, we develop tensor and exterior products. Section \ref{sec5} constructs the Chevalley-Eilenberg cohomology. In Section \ref{sec6}, we define cup-product on the cochain complex Mackey functors and show that graded cohomology \( H^{*}(L,M) \) becomes a \( C_{P} \)-Green functor of graded Lie type under this cup-product. Finally, in Section \ref{sec7}, we classify singular extensions, generalizing the classical Lie algebra extension theory to the equivariant setting.

Our results underscore the versatility of Green functors in equivariant algebra and provide new tools for studying Lie structures in categories enriched with group actions. This work lays the groundwork for future explorations in equivariant deformation theory based on the work of Gerstenhaber \cite{G63, G64}, homotopical algebra, and applications to geometric contexts where symmetries and noncommutative structures intertwine.
\section{Preliminary}\label{sec2}
In this section, we recall basic definitions and results following \cite{Be,Bl, Dr,Gr,E87, E91, We, Weibel, luca}, which will be used throughout the paper. 

Fix a commutative ring \( R \) and a finite group \( G \), and denote by \( S(G) \) the set of all subgroups of \( G \). For \( H \in S(G) \) and \( g \in G \), let \( ^{g}H = gHg^{-1} \). If \( H \in S(G) \) and \( L, K \in S(H) \), let \( [L \setminus H / K] \) be a set of representatives of double cosets \( LhK \) with \( h \in H \). Finally, if \( K \leq H \leq G \), let \( H/K \) be a set of representatives of cosets \( hK \) with \( h \in H \). In this paper, we always assume $R$ is a commutative ring with characteristics $0$.

\begin{Def}
A \( G \)-Mackey functor \( M \) (over \( R \)) is a functor from the category of subgroups \( S(G) \) of \( G \) to the category of \( R \)-modules, along with three families of \( R \)-linear morphisms. The families \( r_{K}^{H} : M(H) \rightarrow M(K) \) of restrictions and \( t_{K}^{H} : M(K) \rightarrow M(H) \) of transfers are indexed on the collection of \( H, K \in S(G) \) such that \( K \leq H \). The family \( c_{g} = c_{g,H} : M(H) \rightarrow M(^{g}H) \) of conjugation maps is indexed on \( H \in S(G) \) and \( g \in G \). For all \( g, h \in G \) and \( H, K, L \in S(G) \), these maps must satisfy the following conditions:

\begin{enumerate}
    \item[(i)] If \(L \leq K \leq H \leq G \), then 
    \[
    r_{L}^{K} \circ r_{K}^{H} = r_{L}^{H}, \quad t_{K}^{H} \circ t_{L}^{K} = t_{L}^{H}.
    \]
    
    \item[(ii)] Identity properties:
    \[
    r_{H}^{H} = \text{id}_{M(H)}, \quad t_{H}^{H} = \text{id}_{M(H)}.
    \]
    
    \item[(iii)] Conjugation composition:
    \[
    c_{gh} = c_{g} \circ c_{h}.
    \]
    
    \item[(iv)] If \( h \in H \), then \( c_{h} : M(H) \rightarrow M(H) \) is the identity map.
    
    \item[(v)] Compatibility of conjugation with restrictions and transfers:
    \[
    c_{g} \circ r_{K}^{H} = r_{^{g}K}^{^{g}H} \circ c_{g}, \quad c_{g} \circ t_{K}^{H} = t_{^{g}K}^{^{g}H} \circ c_{g}.
    \]
    
    \item[(vi)] Mackey Axiom: If \( L, K \leq H \), then
    \[
    r_{L}^{H} \circ t_{K}^{H} = \sum_{h \in [L \setminus H / K]} t_{L \cap ^{h}K}^{L} \circ r_{L \cap ^{h}K}^{^{h}K} \circ c_{h}.
    \]
\end{enumerate}

\end{Def}

\begin{Exam}
Let \( M \) be a left \( R[G] \)-module. The fixed point functor \( FP_{M} \) is defined by
\[
FP_{M}(H) = M^{H} = \{m \in M \mid h \cdot m = m, \text{ for all } h \in H \}.
\]
Let \(H\) and \(K\) be two subgroups of $G$ such that \(K\leq H\). Now, the restriction map \( r^{H}_{K} : M^{H} \rightarrow M^{K} \) is the inclusion of fixed points. The transfer map \( t^{H}_{K} : M^{K} \rightarrow M^{H} \) is the relative transfer map, defined by
\[
t^{H}_{K}(m) = \sum_{h \in H/K} h \cdot m,
\]
for \( m \in M^{K} \). The conjugation maps are defined by
\[
c_{g}(m) = g \cdot m,
\]
for all \( g \in G \) and \( m \in M \).

It can be easily verified that the fixed point functor \( FP_{M} \) is a Mackey functor.
\end{Exam}

Let \( R[W_G H] \) be the group ring of the Weyl group \( W_G H \), where \( W_G H = N_G(H)/H \) is the normalizer of \( H \) in \( G \) modulo \( H \). 

Let \( M \) and \( N \) be two Mackey functors. A morphism of two Mackey functors \( f : M \rightarrow N \) is a family of \( R \)-module homomorphisms \( f(H) : M(H) \rightarrow N(H) \), for \( H \in \mathcal{S}(G) \), which commute with the three maps: restriction, transfer, and conjugation. Since the morphism \( f \) commutes with conjugation, this implies that \( f(H) \) is an \( R[W_G H] \)-map. 

The category of Mackey functors for \( G \) over \( R \) is denoted by \(\text{Mack}_{R}(G)\). The set of morphisms between any two Mackey functors \( M \) and \( N \) is written as \(\text{Hom}_{\text{Mack}_{R}(G)}(M, N)\). 

It is easy to see that \(\text{Mack}_{R}(G)\) is an abelian category.

\subsection{Pairing Notation}
Here we recall the concept of the product in the context of a pairing. Let \( M \), \( N \), and \( L \) be \( G \)-Mackey functors over the ring \( R \). A pairing is a family of \( R \)-linear maps 
$$
\cdot : M(H) \times N(H) \rightarrow L(H) \quad \text{defined as} \quad (x, y) \mapsto x \cdot y
$$
such that for \( H, K \in \mathcal{S}(G) \) with \( K \leq H \), the following conditions hold:
\begin{enumerate}
    \item[(vii)] \( r^{H}_{K}(x \cdot y) = r^{H}_{K}(x) \cdot r^{H}_{K}(y) \), for \( x \in M(H) \) and \( y \in N(H) \),
    \item[(viii)] \( c_{g}(x \cdot y) = c_{g}(x) \cdot c_{g}(y) \), for \( x \in M(H) \), \( y \in N(H) \), and \( g \in G \),
    \item[(ix)] (Frobenius axiom)
    $$
    t^{H}_{K}(x \cdot r^{H}_{K}(y')) = t^{H}_{K}(x) \cdot y', \quad \text{for } x \in M(K), \, y' \in N(H),
    $$
    $$
    t^{H}_{K}(r^{H}_{K}(x') \cdot y) = x' \cdot t^{H}_{K}(y), \quad \text{for } x' \in M(H), \, y \in N(K).
    $$
\end{enumerate}

\begin{Def}
A Green functor \( A \) for the group \( G \) over a ring \( R \) is a Mackey functor \( A \) equipped with a pairing 
$$
A \times A \rightarrow A
$$
such that, for each \( H \in \mathcal{S}(G) \), the \( R \)-linear map 
$$
A(H) \times A(H) \rightarrow A(H)
$$
makes \( A(H) \) into an associative \( R \)-algebra with unity \( 1_{A(H)} \). Moreover, this structure satisfies the following condition:
\begin{itemize}
    \item[(x)] If \( K \leq H \), then \( r^{H}_{K}(1_{A(H)}) = 1_{A(K)} \).
\end{itemize}
\end{Def}

\begin{Exam}
Let \( M \) be a \( G \)-algebra over the ring \( R \). Then the fixed-point functor \( \text{FP}_M \) from the previous example is, in fact, a cohomological Green functor for \( G \).

In particular, consider the case when \( M = \text{End}_R(W) \), where \( W \) is an \( R[G] \)-module. In this case, we obtain a Green functor where the multiplication is given by function composition. This Green functor is denoted by \( \text{End}_{R^*}(W) \).
\end{Exam}

\section{${C_{p}}$-Green functors of Lie type}\label{sec3}
In this section, we will define a detailed description of $C_{p}$-Green functors of Lie type along with some examples.
\begin{Def}
A $C_{p}$-Mackey functor $L$ is called a $C_{p}$-Green functors of Lie type if the following conditions hold:
\begin{enumerate}
\item[1.]  $L(C_{p})$ and $L(e)$ both are Lie algebras with respect to some Lie brackets $[,]$.
\item[2.]  The restriction maps $r^{C_{p}}_{e}:L(C_{p})\longrightarrow L(e)$ and conjugation maps $c_{x}:L(e)\longrightarrow L(e)$ are Lie algebra homomorphisms.
\item[3.] Frobenius relations hold: If $e\leq C_{p}$ are subgroups of $C_{p}$, then
$$
[t^{C_{p}}_{e}(a),b]=t^{C_{p}}_{e}[a,r^{C_{p}}_{e}(b)];
$$
$$
[b,t^{C_{p}}_{e}(a)]=t_{e}^{C_{p}}[r^{C^{p}}_{e}(b),a],
$$
for all $a\in L(e)$ and $b\in L(C_{p})$. We express the $C_{p}$-Green functors of Lie type by following diagram:
$$
\begin{tikzpicture}[x=3cm,y=2cm]
\node (a) at (0,0) {$L(e)$};
\node (v) at (0,2) {$L(C_{p})$};
\draw[->] (a) to[bend left, "{$t^{C_{p}}_{e}$}"]   (v);
\draw[->] (v) to[bend left,"{$r_{e}^{C_{p}}$}"] (a);
\draw[->] (a) to[loop right,"{$c_{g}$}"] (a);
\end{tikzpicture}
.
$$
We denote the set of all $C_{p}$-Green functors of Lie type by $Green_{Lie}(C_{p})$.
\end{enumerate}
\end{Def}
\begin{Rem}
If \( L(C_p) \) and \( L(e) \) are both graded Lie algebras, and the restriction, transfer, and conjugation maps respect the gradation, then the structure is called a \( C_p \)-Green functor of graded Lie type.
\end{Rem}
\begin{Exam}\label{Heisenberg}
	For $n \ge 1$, let $\mathfrak{h}_{2n+1}$ be the Heisenberg Lie algebra over $\mathbb{K}$ generated by $2n+1$ elements $$\{x_1,\ldots,x_n,y_{1},\ldots,y_{n},z\},$$ where the Lie bracket $[\cdot,\cdot]$ is defined as follows 
	\[
	[x_i,y_{i}] = z,\quad  1 \le i \le n.
	\]   
	The bracket on other combinations of basis elements are zero. For $p\leq n$, let us consider the  following $C_p$ action on the Lie algebra $\mathfrak{h}_{2n+1}$ by Lie algebra automorphisms: 
	 \begin{itemize}
	 \item $\sigma(x_i):=x_{[i+1]},~\sigma(y_i):=y_{[i+1]}$ for $i=1,\ldots, p$,
	 \item $\sigma(x_i):=x_{i},~\sigma(y_i):=y_{i}$, for $i> p$, and $\sigma(z):=z$, 
	 \end{itemize}
	 where $C_p=\langle\sigma\rangle$ and $[i+1]=i+1 (mod~ p)$. Next, take $F(C_p)$ to be the Heisenberg Lie algebra generated by elements $\{\sum_{j=1}^p x_j, x_{p+1},\ldots,x_n,\sum_{j=1}^p y_j, y_{p+1},\ldots,y_{n},z\}$ and $F(e)=\mathfrak{h}_{2n+1}$. Note that $F(C_p)=\mathfrak{h}_{2(n-p)+3}$. Let us consider the following structure maps:
	 \begin{itemize}
	 \item \textbf{Restriction:} The restriction map $r_e^{C_p}:F(C_p)\rightarrow F(e)$ is the inclusion map;
     \item \textbf{Transfer:} The transfer map is defined by 
     \begin{equation*}
     \begin{split}
     t_{e}^{C_p}(x_i)=\sum_{j=1}^p x_j,\quad & t_{e}^{C_p}(y_i)=\sum_{j=1}^p y_j,\quad\text{for } i=1,\ldots,p,\\
        t_{e}^{C_p}(x_i)= p x_i,\quad & t_{e}^{C_p}(y_i)=p y_i,\quad\text{and }\quad t_{e}^{C_p}(z)=p z\quad\text{for } i> p;
     \end{split}
     \end{equation*}
     \item \textbf{Conjugation:} For $\sigma^k\in C_p$ with $k=1,\ldots,p$ the conjugation map $c_{\sigma^k}:\mathfrak{h}_{2n+1}\rightarrow \mathfrak{h}_{2n+1}$ is defined by
     \begin{equation*}
     \begin{split}
     c_{\sigma^k}(x_i)=x_{[i+k]}, \quad& c_{\sigma^k}(y_i)=y_{[i+k]},\quad\text{for } i\leq p;\\
     c_{\sigma^k}(x_i)=x_{i}, \quad& c_{\sigma^k}(y_i)=y_{i},\quad\text{and }\quad c_{\sigma^k}(z)=z,\quad\text{for } i> p.
     \end{split}
     \end{equation*}
     	 \end{itemize}
With the above structure maps, $F$ is a $C_p$-green functor of Lie type.       	 
\end{Exam} 

\begin{Exam}\label{special linear matrices}
	Let us consider the Lie algebra $\mathfrak{sl}_n{\mathbb{C}}$, that is the vector space of $n\times n$ complex matrices with trace $0$ equipped with the commutator bracket. Let us consider the  following $C_2$ action on the Lie algebra $\mathfrak{sl}_n{\mathbb{C}}$ by Lie algebra automorphisms: 
	 $$\sigma (X)=-X^{T}.$$
	 where $X^{T}$ is the transpose of the matrix $X$ and $C_2=\langle\sigma\rangle$. We define $F(C_2):=\mathfrak{so}_n(\mathbb{C})$, i.e., the  Lie algebra of complex skew-symmetric matrices with trace zero, and $F(e):=\mathfrak{sl}_n{\mathbb{C}}$. Then, we consider the following maps
	 \begin{itemize}
	 \item \textbf{Restriction:} The map $r_e^{C_2}:F(C_2)\rightarrow F(e)$ is the inclusion map;
     \item \textbf{Transfer:} The transfer map is defined by 
     \begin{equation*}
     \begin{split}
     t_{e}^{C_2}(X)=X-X^T,\quad \text{for any } X\in \mathfrak{sl}_n(\mathbb{C});
     \end{split}
     \end{equation*}
     \item \textbf{Conjugation:} The conjugation map $c_{\sigma}:\mathfrak{sl}_n(\mathbb{C})\rightarrow \mathfrak{sl}_n(\mathbb{C})$ is defined by
     \begin{equation*}
     c_{\sigma}(X)= -X^T,\quad \text{for any } X\in \mathfrak{sl}_n(\mathbb{C}).
     \end{equation*}
     	 \end{itemize}
With the above structure maps, $F$ is a $C_2$-green functor of Lie type.       	 
\end{Exam} 

\begin{Rem}
The two examples above are specific cases of the following more general example of a Green functor of Lie type, constructed via a fixed-point Mackey functor.
\end{Rem}

\begin{Exam}
Let \( \mathfrak{g} \) be a Lie algebra over a field \( k \), equipped with a \( C_p \)-action by Lie algebra automorphisms. Define a \( C_p \)-Green functor \( F \) of Lie type as follows.

The values of \( F \) are given by:
\[
F(C_p) = \mathfrak{g}^{C_p}, \quad F(e) = \mathfrak{g},
\]
where \( \mathfrak{g}^{C_p} \) denotes the subalgebra of \( C_p \)-fixed points in \( \mathfrak{g} \).

The structure maps of the functor are defined as follows.

\textbf{Restriction:} The map \( r^{C_p}_{e} : \mathfrak{g}^{C_p} \hookrightarrow \mathfrak{g} \) is the natural inclusion.

\textbf{Transfer:} The map \( t^{C_p}_{e} : \mathfrak{g} \to \mathfrak{g}^{C_p} \) is defined by
\[
t^{C_p}_{e}(x) = \sum_{g \in C_p} g \cdot x.
\]

\textbf{Conjugation:} For each \( g \in C_p \), the conjugation map \( c_g : \mathfrak{g} \to \mathfrak{g} \) is given by
\[
c_g(x) = g \cdot x.
\]

Note that both \( \mathfrak{g}^{C_p} \) and \( \mathfrak{g} \) are Lie algebras with their usual Lie brackets. It is straightforward to verify that these data define a \( C_p \)-Green functor of Lie type.
\end{Exam}

\begin{Exam}
Let \( \mathfrak{g} \) be a finite-dimensional Lie algebra over a field \( k \). We define a \( C_p \)-Green functor \( L \) of Lie type as follows.

At the group \( C_p \), define \( L(C_p) = \mathfrak{g} \), equipped with its original Lie bracket \( [-,-]_{\mathfrak{g}} \).

At the trivial group \( e \), define \( L(e) = \mathfrak{g}^{\oplus p} \), the direct sum of \( p \) copies of \( \mathfrak{g} \), with the Lie bracket defined component-wise:
\[
[(x_1, \dots, x_p), (y_1, \dots, y_p)] = ([x_1, y_1]_{\mathfrak{g}}, \dots, [x_p, y_p]_{\mathfrak{g}}).
\]

The structure maps of the functor are as follows.

\textbf{Restriction:} The map \( r^{C_p}_{e} : \mathfrak{g} \to \mathfrak{g}^{\oplus p} \) is the diagonal embedding:
\[
r^{C_p}_{e}(x) = (x, x, \dots, x).
\]

\textbf{Conjugation:} For a generator \( g \in C_p \), the conjugation map \( c_{g} : \mathfrak{g}^{\oplus p} \to \mathfrak{g}^{\oplus p} \) is the cyclic permutation:
\[
c_{g}(x_1, x_2, \dots, x_p) = (x_p, x_1, \dots, x_{p-1}).
\]

\textbf{Transfer:} The map \( t^{C_p}_{e} : \mathfrak{g}^{\oplus p} \to \mathfrak{g} \) is given by component-wise summation:
\[
t^{C_p}_{e}(x_1, \dots, x_p) = x_1 + x_2 + \cdots + x_p.
\]
It is a routine work to check that it is a \(C_{p}\)-Green functor of Lie type.
\end{Exam}
\begin{Exam}
Let $A$ be an associative $R$-algebra equipped with an action of $C_{p}$; equivalently, $A$ is an associative algebra over the group ring $R[C_{p}]$.

$R$-linear derivations, $Der_{R}(A)$, are linear maps $D:A\rightarrow A$ satisfying:
$$
D(ab)=D(a)b+aD(b).
$$
$R[C_{p}]$-linear(invariant) derivations, $Der_{R[C_{p}]}(A)$, are those derivations that commute with the $C_{p}$ action:
$$
D(g\cdot a)=g\cdot D(a), \text{ for all}~g\in G,~a\in A.
$$
It is trivial to show that $Der_{R}(A)$ and $Der_{R[C_{p}]}(A)$ both are lie algebra over the commutative ring $R$ under the commutator bracket and also easy to see that $Der_{R[e]}(A)=Der_{R}(A)$.

Now, we define a $C_{p}$-Green functor $L$ of Lie type as follows.

For the group $C_{p}$, define $L(C_{p})=Der_{R[C_{p}]}(A)$, and for the trivial group $e$, define $L(e)=Der_{R}(A)$.

The family of structure maps are as follows:

\textbf{Restriction:} The restriction map $r^{C_{p}}_{e}:Der_{R[C_{p}]}(A)\rightarrow Der_{R}(A)$ is inclusion map.

\textbf{Transfer:} The transfer map $t^{C_{p}}_{e}:Der_{R}(A)\rightarrow Der_{R[C_{p}]}(A)$ is define as
$$
t^{C_{p}}_{e}(D)(a)=\sum_{g \in C_p}g\cdot D(g^{-1}\cdot a),~~~\text{for all}~a\in A.
$$

\textbf{Conjugation:} For all $g\in C_{p}$, the conjugation map $C_{g}:Der_{R}(A)\rightarrow Der_{R}(A)$ is define as:
$$
c_{g}(D)(a)=g\cdot D(g^{-1}\cdot a),~~~\text{for all }~a\in A.
$$
\end{Exam}

\begin{Def}\label{Def1}
Let $L_{1}, L_{2} \in \mathrm{Green}_{\mathrm{Lie}}(C_{p})$. We define a morphism $f: L_{1} \longrightarrow L_{2}$ in $\mathrm{Green}_{\mathrm{Lie}}(C_{p})$ as a pair $(f_{C_{p}}, f_{e})$ of Lie algebra homomorphisms which commute with the transfer, restriction, and conjugation maps. That is, $f=(f_{C_{p}},f_{e})$ is a morphism for which the following diagram commutes:
\[
\begin{tikzcd}
L_{1}(C_{p}) \arrow{r}{f_{C_{p}}} \arrow[bend left]{d}{r^{C_{p}}_{e}} & L_{2}(C_{p}) \arrow[bend left]{d}{r^{C_{p}}_{e}} \\
L_{1}(e) \arrow{r}{f_{e}} \arrow[bend left]{u}{t^{C_{p}}_{e}} & L_{2}(e) \arrow[bend left]{u}{t^{C_{p}}_{e}}
\end{tikzcd}
\]

In other words, given $L_{1}, L_{2} \in \mathrm{Green}_{\mathrm{Lie}}(C_{p})$ and $x \in C_{p}$, a morphism $f: L_{1} \longrightarrow L_{2}$ consists of Lie algebra homomorphisms
\[
f_{C_{p}}: L_{1}(C_{p}) \longrightarrow L_{2}(C_{p}) \quad \text{and} \quad f_{e}: L_{1}(e) \longrightarrow L_{2}(e)
\]
such that the following diagrams commute:

\[
\begin{tikzcd}
L_{1}(C_{p}) \arrow{r}{f_{C_{p}}} & L_{2}(C_{p}) \\
L_{1}(e) \arrow{r}{f_{e}} \arrow[swap]{u}{t^{C_{p}}_{e}} & L_{2}(e) \arrow[swap]{u}{t^{C_{p}}_{e}}
\end{tikzcd}
\quad;\quad
\begin{tikzcd}
L_{1}(C_{p}) \arrow{r}{f_{C_{p}}} \arrow[swap]{d}{r^{C_{p}}_{e}} & L_{2}(C_{p}) \arrow[swap]{d}{r^{C_{p}}_{e}} \\
L_{1}(e) \arrow{r}{f_{e}} & L_{2}(e)
\end{tikzcd}
\quad;\quad
\begin{tikzcd}
L_{1}(e) \arrow{r}{f_{e}} \arrow[swap]{d}{c_{x, C_{p}}} & L_{2}(e) \arrow[swap]{d}{c_{x, C_{p}}} \\
L_{1}(e) = L_{1}(x_{e}) \arrow{r}{f_{e}} & L_{2}(x_{e}) = L_{2}(e)
\end{tikzcd}
.
\]
\end{Def}

\begin{Def}
Let $M$ be a $C_{p}$-Mackey functor and $L$ a $C_{p}$-Green functor of Lie type. Then $M$ is called a left module over $L$ if the following axioms are satisfied:
\begin{enumerate}
    \item The components $M(e)$ and $M(C_{p})$ are left modules over Lie algebras $L(e)$ and $L(C_{p})$, respectively.
    
    \item For all $a \in L(C_{p})$, $m \in M(C_{p})$, $x \in L(e)$, and $u \in M(e)$, the following compatibility conditions hold:
    \[
    r^{C_{p}}_{e}(a \cdot m) = r^{C_{p}}_{e}(a) \cdot r^{C_{p}}_{e}(m),
    \qquad
    c_{g}(x \cdot u) = c_{g}(x) \cdot c_{g}(u).
    \]
    
    \item \textbf{Frobenius relations}: For all $x \in L(e)$ and $m \in M(C_{p})$,
    \[
    t_{e}^{C_{p}}(x) \cdot m = t_{e}^{C_{p}}(x \cdot r_{e}^{C_{p}}(m)),
    \]
    and for all $a \in L(C_{p})$ and $u \in M(e)$,
    \[
    a \cdot t_{e}^{C_{p}}(u) = t_{e}^{C_{p}}(r_{e}^{C_{p}}(a) \cdot u).
    \]
\end{enumerate}
\end{Def}
Similarly, one can define the notion of a right module for a \( C_p \)-Green functor of Lie type. A module that is both a left and a right module is called a \emph{bimodule}.

\section{Tensor product of ${C_{p}}$-Green functors of Lie type}\label{sec4}
In this section, we define tensor product of ${C_{p}}$-Green functors of Lie type with the help of tensor product of Lie algebras \cite{E91}.
\begin{Def}
Let $L_{1}, L_{2} \in \mathrm{Green}_{\mathrm{Lie}}(C_{p})$. Then, we have the following diagrams:
\[
\begin{tikzpicture}[x=3cm,y=2cm]
\node (a) at (0,0) {$L_{1}(e)$};
\node (v) at (0,2) {$L_{2}(C_{p})$};
\draw[->] (a) to[bend left, "{$t^{C_{p}}_{e}$}"] (v);
\draw[->] (v) to[bend left,"{$r_{e}^{C_{p}}$}"] (a);
\draw[->] (a) to[loop right,"{$c_{g}$}"] (a);
\end{tikzpicture}
\qquad\text{;}\qquad
\begin{tikzpicture}[x=3cm,y=2cm]
\node (a) at (0,0) {$L_{2}(e)$};
\node (v) at (0,2) {$L_{2}(C_{p})$};
\draw[->] (a) to[bend left, "{$t^{C_{p}}_{e}$}"] (v);
\draw[->] (v) to[bend left,"{$r_{e}^{C_{p}}$}"] (a);
\draw[->] (a) to[loop right,"{$c_{g}$}"] (a);
\end{tikzpicture}
.
\]

We define the tensor product $L_{1} \otimes L_{2}$ via the following structure:
\[
\begin{tikzpicture}[x=3cm,y=2cm]
\node (a) at (0,0) {$L_{1}(e)\otimes L_{2}(e)$};
\node (v) at (0,2) {$\left(L_{1}(e)\otimes L_{2}(e) \oplus L_{1}(C_{p})\otimes L_{2}(C_{p})\right) \big/ \sim$};
\draw[->] (a) to[bend left, "{$t^{C_{p}}_{e}$}"] (v);
\draw[->] (v) to[bend left,"{$r_{e}^{C_{p}}$}"] (a);
\draw[->] (a) to[loop right,"{$c_{g} \otimes c_{g} = c_{g}$}"] (a);
\end{tikzpicture}
.
\]

\begin{enumerate}
    \item The equivalence relation $\sim$ is generated by:
    \[
    a \otimes t^{C_{p}}_{e}(y) \sim r^{C_{p}}_{e}(a) \otimes y,\quad
    t^{C_{p}}_{e}(x) \otimes b \sim x \otimes r^{C_{p}}_{e}(b),
    \]
  
    for all $a \in L_{1}(C_{p})$, $b \in L_{2}(C_{p})$, $x \in L_{1}(e)$, and $y \in L_{2}(e)$.

    \item The conjugation action is given by:
    \[
    c_{g}(x \otimes y) = c_{g}(x) \otimes c_{g}(y),
    \quad \text{and} \quad c_{g}^{p} = \mathrm{id}.
    \]

    \item Elements in the quotient are denoted by angle brackets, e.g., $\langle a \otimes b \rangle$ and $\langle x \otimes y \rangle$, where $a \otimes b \in L_{1}(C_{p}) \otimes L_{2}(C_{p})$ and $x \otimes y \in L_{1}(e) \otimes L_{2}(e)$.

    \item The restriction map $r^{C_{p}}_{e}$ is a homomorphism:
    \[
    r^{C_{p}}_{e}(\langle a \otimes b \rangle) = r^{C_{p}}_{e}(a) \otimes r^{C_{p}}_{e}(b).
    \]

     For $x \otimes y \in L_{1}(e) \otimes L_{2}(e)$, define:
    \[
    r^{C_{p}}_{e}(\langle x \otimes y \rangle) = x \otimes y + c_{g}(x) \otimes c_{g}(y) + \cdots + c_{g}^{p-1}(x) \otimes c_{g}^{p-1}(y).
    \]

    \item The transfer map is defined by:
    \[
    t^{C_{p}}_{e}(x \otimes y) = \langle x \otimes y \rangle.
    \]
\end{enumerate}
\end{Def}

\begin{Rem}
The tensor product $L_{1} \otimes L_{2}$, together with the structure maps of conjugation, restriction, and transfer, forms a Mackey functor. 
\end{Rem}

\subsection{Exterior product of ${C_{p}}$-Green functor of Lie type:}
In this section, we define the exterior product. For all subgroups $H \leq C_{p}$, let $(\overline{L_{1} \otimes L_{2}})(H)$ denote the ideal of $(L_{1} \otimes L_{2})(H)$ generated by all elements of the form:
\[
a_{i} \otimes b_{i} \oplus x_{i} \otimes y_{i} \quad \text{and} \quad a_{i} \otimes b_{i},
\]
where $a_{i}\in L_{1}(e)$, $b_{i}\in L_{2}(e)$,$~x_{i}\in L_{1}(C_{p})$ and $y_{i}\in L_{2}(C_{p})$, where $a_{i}=b_{i}$ and $x_{i}=y_{i}$.

\begin{Def}
Let $L_{1}, L_{2} \in \mathrm{Green}_{\mathrm{Lie}}(C_{p})$. We define the exterior product $L_{1} \wedge L_{2}$ using the following diagram:
\[
\begin{tikzpicture}[x=3cm,y=2cm]
\node (a) at (0,0) {$(L_{1} \otimes L_{2})(e) / (\overline{L_{1} \otimes L_{2}})(e)$};
\node (v) at (0,2) {$(L_{1} \wedge L_{2})(C_{p}) = (L_{1} \otimes L_{2})(C_{p}) / (\overline{L_{1} \otimes L_{2}})(C_{p})$};
\draw[->] (a) to[bend left, "{$t^{C_{p}}_{e}$}"] (v);
\draw[->] (v) to[bend left,"{$r_{e}^{C_{p}}$}"] (a);
\draw[->] (a) to[loop right,"{$c_{g}$}"] (a);
\end{tikzpicture}
\]

\begin{enumerate}
    \item We denote elements in $(L_{1} \wedge L_{2})(C_{p})$ by $\langle a \wedge b \rangle$ and elements in $(L_{1} \wedge L_{2})(e)$ by $\langle x \wedge y \rangle$.
    
    \item The conjugation action is given by:
    \[
    c_{g}(x \wedge y) = c_{g}(x) \wedge c_{g}(y), \quad \text{for } x \wedge y \in (L_{1} \wedge L_{2})(e).
    \]
    
    \item The restriction map $r^{C_{p}}_{e}$ is a homomorphism, that is:
    \[
    r^{C_{p}}_{e}(\langle a \wedge b \rangle) = r^{C_{p}}_{e}(a) \wedge r^{C_{p}}_{e}(b).
    \]

    The restriction map on elements in $(L_{1} \wedge L_{2})(e)$ is defined as:
    \[
    r_{e}^{C_{p}}(\langle x \wedge y \rangle) = x \wedge y + c_{g,e}(x) \wedge c_{g,e}(y) + \cdots + c_{g,e}^{p-1}(x) \wedge c_{g,e}^{p-1}(y).
    \]

    \item The transfer map is defined by:
    \[
    t^{C_{p}}_{e}(x \wedge y) = \langle x \wedge y \rangle.
    \]
\end{enumerate}
\end{Def}

\begin{Rem}
The exterior product $L_{1} \wedge L_{2}$, together with the structure maps (conjugation, restriction, and transfer), forms a Mackey functor. 
\end{Rem}

\section{Chevalley-Eilenberg cohomology of {$C_{p}$}-Green functor of Lie type} \label{sec5}
In this section, we develop a notion of Chevalley-Eilenberg cohomology for the Green functors of Lie type using Chevalley-Eilenberg cohomology \cite{che, Ho} for Lie algebras.

\subsection{$C_{p}$-HOM$(\wedge^{n} L_{1},L_{2}$)}
Let $L_{1}, L_{2} \in \mathrm{Green}_{\mathrm{Lie}}(C_{p})$. In this section, we define the $C_{p}$-equivariant morphism space
\[
C_{p}\text{-}\mathrm{HOM}(\wedge^{n}L_{1}, L_{2}).
\]

\begin{Def}
Let $L_{1}, L_{2} \in \mathrm{Green}_{\mathrm{Lie}}(C_{p})$. We define the $C_{p}$-equivariant homomorphism space
\[
C_{p}\text{-}\mathrm{HOM}(\wedge^{n}L_{1}, L_{2})
\]
using the following diagram:
\[
\begin{tikzpicture}[x=3cm, y=2cm]
\node (a) at (0,0) {$\mathrm{Hom}(\wedge^{n}L_{1}(e), L_{2}(e))$};
\node (v) at (0,2) {$\mathrm{Hom}_{\mathrm{Green}_{\mathrm{Lie}}(C_{p})}(\wedge^{n}L_{1}, L_{2})$};
\draw[->] (a) to[bend left, "{$T$}"] (v);
\draw[->] (v) to[bend left, "{$R$}"] (a);
\draw[->] (a) to[loop right, "{$c_{g}$}"] (a);
\end{tikzpicture}
\]

such that the following conditions hold:

\begin{enumerate}
    \item The set $\mathrm{Hom}_{\mathrm{Green}_{\mathrm{Lie}}(C_{p})}(\wedge^{n}L_{1}, L_{2})$ consists of pairs $(f_{C_{p}}, f_{e})$ of alternating $n$-linear morphisms between $C_{p}$-Green functors of Lie type that commute with restriction, and conjugation maps, and also satisfy Frobenius relation with transfer map. In other words, we have
    \[
    \mathrm{Hom}_{\mathrm{Green}_{\mathrm{Lie}}(C_{p})}(\wedge^{n}L_{1}, L_{2}) = \left\{ (f_{C_{p}}, f_{e}) \mid f_{H}: \wedge^{n}L_{1}(H) \to L_{2}(H) \text{ is a alternating $n$-linear morphism for~} H = e, C_{p} \right\}.
    \]

    \item The space $\mathrm{Hom}(\wedge^{n}L_{1}(e), L_{2}(e))$ is the set of alternating $n$-linear morphism from $\wedge^{n}L_{1}(e)$ to $L_{2}(e)$.

    \item The action of $C_{p}$ on $\mathrm{Hom}(\wedge^{n}L_{1}(e), L_{2}(e))$ is defined as follows: Let $\theta\in \mathrm{Hom}(\wedge^{n}L_{1}(e),L_{2}(e))$.
    \[
    ({}^{g}\theta)(y) := g \cdot \theta(g^{-1} \cdot y), \quad \text{for all } y \in \wedge^{n}L_{1}(e).
    \]

    \item The restriction map $R: \mathrm{Hom}_{\mathrm{Green}_{\mathrm{Lie}}(C_{p})}(\wedge^{n}L_{1}, L_{2}) \to \mathrm{Hom}(\wedge^{n}L_{1}(e), L_{2}(e))$ is defined by:
    \[
    R(f_{C_{p}}, f_{e}) = f_{e}.
    \]

    \item The transfer map
    \[
    T: \mathrm{Hom}(\wedge^{n}L_{1}(e), L_{2}(e)) \longrightarrow \mathrm{Hom}_{\mathrm{Green}_{\mathrm{Lie}}(C_{p})}(\wedge^{n}L_{1}, L_{2})
    \]
    is defined as follows. Let $\gamma: \wedge^{n}L_{1}(e) \to L_{2}(e)$ be an alternating $n$-linear morphism. Then $T(\gamma) = (\mu, \delta)$ is defined by the following commutative diagram:
    \[
    \begin{tikzcd}
    \wedge^{n}L_{1}(C_{p}) \arrow{r}{\mu} \arrow[bend left]{d}{r^{C_{p}}_{e}} & L_{2}(C_{p}) \arrow[bend left]{d}{r^{C_{p}}_{e}} \\
    \wedge^{n}L_{1}(e) \arrow{r}{\delta} \arrow[bend left]{u}{t^{C_{p}}_{e}} & L_{2}(e) \arrow[bend left]{u}{t^{C_{p}}_{e}}
    \end{tikzcd}
    \]
    where
    \[
    \mu(a) := t^{C_{p}}_{e} \circ \gamma \circ r^{C_{p}}_{e}(a),
    \quad \text{and} \quad
    \delta(x) := \sum_{i+j=p} c_{g}^{i} \circ \gamma \circ c_{g}^{j}(x).
    \]
    for all $a \in \wedge^{n}L_{1}(C_{p})$ and $x \in \wedge^{n}L_{1}(e)$.
\end{enumerate}
\end{Def}

The definition of Chevalley--Eilenberg cohomology can be extended to \( C_p \)-Green functors of Lie type, as discussed in this section.
\begin{Def}
Let \( L \) be a \( C_p \)-Green functor of Lie type, and let \( M \) be a bimodule over \( L \). Then, the Chevalley--Eilenberg cohomology of \( L \) with coefficients in \( M \), which again forms a \( C_p \)-Mackey functor, is defined as the cohomology of the following diagram of cochain complexes:
\[
\begin{tikzcd}
M(C_p) \arrow{r}{b_{0}'} \arrow[bend left]{d}{r^{{C_p}}_e} & \mathrm{Hom}_{\mathrm{Mack}(C_p)}(L, M) \arrow{r}{b_{1}'} \arrow[bend left]{d}{r} & \mathrm{Hom}_{\mathrm{Mack}(C_p)}(L \wedge L, M) \arrow{r}{b_{2}'} \arrow[bend left]{d}{r} & \cdots \\
M(e) \arrow{r}{b_0} \arrow[bend left]{u}{t^{{C_p}}_e} & \mathrm{Hom}(L(e), M(e)) \arrow{r}{b_1} \arrow[bend left]{u}{t} & \mathrm{Hom}(L(e) \wedge L(e), M(e)) \arrow{r}{b_2} \arrow[bend left]{u}{t} & \cdots
\end{tikzcd}
\]

\noindent
The coboundary maps are defined as follows:

\begin{enumerate}
    \item[1.] The coboundary \( b_n \) in the lower complex is given by:
    \begin{align*}
    b_n(f_e)(x_1, \dots, x_{n+1}) &= \sum_{i=1}^{n+1} (-1)^{i+1} x_i f_e(x_1, \dots, \widehat{x_i}, \dots, x_{n+1}) \\
    &\quad + \sum_{i<j} (-1)^{i+j} f_e([x_i, x_j], x_1, \dots, \widehat{x_i}, \dots, \widehat{x_j}, \dots, x_{n+1}),
    \end{align*}
    where \( \widehat{x_i} \) denotes omission of the \( i \)-th argument.

    \item[2.] The coboundaries \( b_n' \) in the upper complex are given as follows:
    \begin{enumerate}
        \item[(i)] For \( m \in M(C_p) \):
        \[
        b_0'(m)(a) = a \cdot m, \qquad b_0'(m)(x) = x \cdot r^{{C_p}}_e(m).
        \]

        \item[(ii)] For \( f_{C_p} \in \mathrm{Hom}_{\mathrm{Mack}(C_p)}(L, M) \):
        \[
        b_1'(f_{C_p})(a, b) = a f_{C_p}(b) - b f_{C_p}(a) - f_{C_p}([a, b]),
        \]
        and similarly for \( f_e \in \mathrm{Hom}(L(e), M(e)) \):
        \[
        b_1'(f_e)(x, y) = x f_e(y) - y f_e(x) - f_e([x, y]).
        \]

        \item[(iii)] For \( f_{C_p} \in \mathrm{Hom}(L \wedge L, M) \):
        \begin{align*}
        b_2'(f_{C_p})(a, b, c) &= a f_{C_p}(b, c) - b f_{C_p}(a, c) + c f_{C_p}(a, b) \\
        &\quad - f_{C_p}([a, b], c) + f_{C_p}([a, c], b) - f_{C_p}([b, c], a),
        \end{align*}
        and likewise for \( f_e \in \mathrm{Hom}(L(e) \wedge L(e), M(e)) \):
        \begin{align*}
        b_2'(f_e)(x, y, z) &= x f_e(y, z) - y f_e(x, z) + z f_e(x, y) \\
        &\quad - f_e([x, y], z) + f_e([x, z], y) - f_e([y, z], x).
        \end{align*}

        \item[(iv)] In general, for all \( n \geq 1 \):
        \begin{align*}
        b_n'(f_e)(x_1, \dots, x_{n+1}) &= \sum_{i=1}^{n+1} (-1)^{i+1} x_i f_e(x_1, \dots, \widehat{x_i}, \dots, x_{n+1}) \\
        &\quad + \sum_{i<j} (-1)^{i+j} f_e([x_i, x_j], x_1, \dots, \widehat{x_i}, \dots, \widehat{x_j}, \dots, x_{n+1}),
        \end{align*}
        \begin{align*}
        b_n'(f_{C_p})(a_1, \dots, a_{n+1}) &= \sum_{i=1}^{n+1} (-1)^{i+1} a_i f_{C_p}(a_1, \dots, \widehat{a_i}, \dots, a_{n+1}) \\
        &\quad + \sum_{i<j} (-1)^{i+j} f_{C_p}([a_i, a_j], a_1, \dots, \widehat{a_i}, \dots, \widehat{a_j}, \dots, a_{n+1}).
        \end{align*}
    \end{enumerate}
\end{enumerate}

\noindent
Here, \( a, b, c \in L(C_p) \), \( x, y, z \in L(e) \), \( m \in M(C_p) \), \( u \in M(e) \), \( x_1, \dots, x_{n+1} \in \wedge^{n+1} L(e) \), and \( a_1, \dots, a_{n+1} \in \wedge^{n+1} L(C_p) \).
\begin{prop}
$b_{1}'b_{0}'=0$.
\end{prop}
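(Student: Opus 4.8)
The plan is to reduce the claim to the defining axioms of a Lie module and verify it by a short componentwise computation. Since $b_0'$ and $b_1'$ are morphisms of $C_p$-Mackey functors, and such a morphism vanishes precisely when each of its two components (at $e$ and at $C_p$) vanishes, it suffices to check that both components of $b_1'b_0'$ are zero.

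First I would fix $m \in M(C_p)$ and unwind $b_0'(m)$, which by definition is the pair $(f_{C_p}, f_e)$ with $f_{C_p}(a) = a\cdot m$ for $a\in L(C_p)$ and $f_e(x) = x\cdot r^{C_p}_e(m)$ for $x\in L(e)$. Along the way I would record that this pair really does lie in $\mathrm{Hom}_{\mathrm{Mack}(C_p)}(L,M)$, so that applying $b_1'$ is legitimate: compatibility with restriction is exactly the module axiom $r^{C_p}_e(a\cdot m) = r^{C_p}_e(a)\cdot r^{C_p}_e(m)$, and compatibility with conjugation follows from $c_g\bigl(r^{C_p}_e(m)\bigr) = r^{C_p}_e(m)$, which holds because $c_g$ acts as the identity on $M(C_p)$ for $g\in C_p$.

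Next I would apply $b_1'$ componentwise. At $C_p$, for $a,b\in L(C_p)$,
\[
b_1'(f_{C_p})(a,b) = a\cdot f_{C_p}(b) - b\cdot f_{C_p}(a) - f_{C_p}([a,b]) = a\cdot(b\cdot m) - b\cdot(a\cdot m) - [a,b]\cdot m,
\]
which vanishes by the defining relation of a left $L(C_p)$-module structure on $M(C_p)$. The computation at $e$ is word-for-word the same with $m$ replaced by $r^{C_p}_e(m)\in M(e)$ and $L(C_p)$ replaced by $L(e)$, and it vanishes because $M(e)$ is a left module over the Lie algebra $L(e)$. Combining the two components yields $b_1'b_0' = 0$.

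I do not expect a genuine obstacle here: the statement is essentially the bracket-compatibility identity built into the notion of a Lie module, transported through the definitions at the two levels. The only points requiring a little care are remembering that the lower component of $b_0'(m)$ is constructed from $r^{C_p}_e(m)$ rather than from $m$ itself, and --- if one wants the argument to be fully self-contained --- checking that $b_0'(m)$ is indeed a cochain in the top row before feeding it to $b_1'$.
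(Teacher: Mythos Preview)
Your proposal is correct and follows essentially the same approach as the paper: a direct componentwise computation at the $C_p$- and $e$-levels, reducing the vanishing to the Lie module identity $a\cdot(b\cdot m)-b\cdot(a\cdot m)=[a,b]\cdot m$ applied to $m$ and to $r^{C_p}_e(m)$ respectively. Your extra remark that $b_0'(m)$ really lands in $\mathrm{Hom}_{\mathrm{Mack}(C_p)}(L,M)$ is a welcome bit of care that the paper omits; note, however, that for full self-containment you would also want to record the Frobenius compatibility with transfer, not just restriction and conjugation.
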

\begin{Proof*}
For $m\in M(C_{p})$, $(a,b)\in \wedge^{2}L(C_{p})$ and $(x,y)\in \wedge^{2}L(e)$
\begin{align*}
b_{1}'b_{0}'(m)(a,b)&=a\cdot b_{0}'(m)(b)-b\cdot b_{0}'(m)(a)-b_{0}'(m)([a,b])\\
&=a\cdot b\cdot m -b\cdot a\cdot m -[a,b]\cdot m\\
&=a\cdot b\cdot m -b\cdot a\cdot m-(a\cdot b\cdot m -b\cdot a\cdot m)\\
&=0.
\end{align*}
Also, we have
\begin{align*}
b_{1}'b_{0}'(m)(x,y)&=x\cdot b_{0}'(m)(y)-y\cdot b_{0}'(m)(x)-b_{0}'(m)([x,y])\\
&=x\cdot y \cdot r^{C_{p}}_{e}(m)-y\cdot x\cdot r^{C_{p}}_{e}(m)-[x,y]r_{e}^{C_{p}}(m)\\
&=x\cdot y \cdot r^{C_{p}}_{e}(m)-y\cdot x\cdot r^{C_{p}}_{e}(m)-(x\cdot y \cdot r^{C_{p}}_{e}(m)-y\cdot x\cdot r^{C_{p}}_{e}(m))\\
&=0.
\end{align*}
\end{Proof*}

\begin{Rem}
It follows from the above Proposition and Chevalley--Eilenberg coboundary maps \cite{che} that \( b_{n+1}' b_n' = 0 \), and \( b_{n+1} b_n = 0 \) for all $n\geq 0$.

\end{Rem}

Hence, the Chevalley--Eilenberg cohomology of $C_{p}$-Green functor of Lie type is again a  $C_p$-Mackey functor and which is defined by
\[
H^n(L, M) = H^n(C^n(L, M)) = 
\begin{cases}
H^n_{C_p}(C^n_{C_p}(L, M)), \\
H^n_e(C^n_e(L, M)),
\end{cases}
\]
where \( C^n(L, M) = C_p\text{-}\mathrm{HOM}(\wedge^n L, M) \). 
\end{Def}

\begin{Rem}\label{Rem1}
From the above context, \(Z^{2}(L,M)\) of a \(C_p\)-Green functor of Lie type is given by a pair \((Z^2_{C_p}(L,M), Z^2_e(L,M))\).

Note that elements of \(Z^2_{C_p}(L,M)\), which describe \(2\)-cocycles of \(L\) with values in \(M\), are themselves pairs \((f_{C_p}, f_e)\), where
\[
f_{C_p} : \wedge^2 L(C_p) \to M(C_p) \quad \text{and} \quad f_e : \wedge^2 L(e) \to M(e)
\]
are alternating bilinear maps satisfying the following conditions:

\begin{enumerate}
\item[$1.$] $af_{C_{p}}(b,c)-bf_{C_{p}}(a,c)+cf_{C_{p}}(a,b)-f_{C_{p}}([a,b],c)+f_{C_{p}}([a,c],b)-f_{C_{p}}([b,c],a)=0.$
\item[$2.$] $af_{e}(b,c)-bf_{e}(a,c)+cf_{e}(a,b)-f_{e}([a,b],c)+f_{e}([a,c],b)-f_{e}([b,c],a)=0.$
\item[$3.$] $c_{M}(f_{e}(x,y))=f_{e}(c_{L}(x),c_{L}(y)).$
\item[$4.$] $r_{M}(f_{C_{p}}(a,b))=f_{e}(r_{L}(a),r_{L}(b)).$
\item[$5.$] $f_{C_{p}}(t_{L}(x),a)=t_{M}(f_{e}(x,r_{L}(a))).$
\item[$6.$] $f_{C_{p}}(a,t_{L}(a))=t_{M}(f_{e}(r_{L}(x),a)).$
\end{enumerate}
for all $a,b\in L(C_{p})$ and $x,y\in L(e)$.
Moreover, \( Z^{2}_{e}(L, M) \) is the usual Chevalley–Eilenberg \(2\)-cocycle of the Lie algebra \(L(e)\) with values in the module \(M(e)\).

\end{Rem}

\section{Cup Product}\label{sec6}

Let \( L \) be a \( C_p \)-Green functor of Lie type, and let \( M \) be an \( L \)-module with having additional structure of a \( C_p \)-Green functor of Lie type. For example, one could take $M=L$ via adjoint representation. Define the cup products (pairings) as follows:
\[
\cup_{C_p} : C^m_{C_p}(L, M) \times C^n_{C_p}(L, M) \longrightarrow C^{m+n}_{C_p}(L, M),
\]
\[
\cup_{e} : C^m_{e}(L, M) \times C^n_{e}(L, M) \longrightarrow C^{m+n}_{e}(L, M).
\]

These products are defined by
\[
((f_{C_p}, f_e) \cup_{C_p} (g_{C_p}, g_e))((x_1, \ldots, x_{m+n}), (x_1', \ldots, x_{m+n}')) = \left( (f_{C_p} \cup_{C_p} g_{C_p})(x_1, \ldots, x_{m+n}),\, (f_e \cup_{C_p} g_e)(x_1', \ldots, x_{m+n}') \right),
\]
and
\[
(f_e \cup_e g_e)(x_1', \ldots, x_{m+n}').
\]

Here, the products are given explicitly by
\begin{align*}
(f_{C_p} \cup_{C_p} g_{C_p})(x_1, \ldots, x_{m+n}) 
&= \sum_{\sigma} \text{sign}(\sigma) \cdot [f_{C_p}(x_{\sigma(1)}, \ldots, x_{\sigma(m)}),\ g_{C_p}(x_{\sigma(m+1)}, \ldots, x_{\sigma(m+n)})]_{M(C_p)}, \\
(f_e \cup_{C_p} g_e)(x_1', \ldots, x_{m+n}') 
&= \sum_{\sigma} \text{sign}(\sigma) \cdot [f_e(x_{\sigma(1)}', \ldots, x_{\sigma(m)}'),\ g_e(x_{\sigma(m+1)}', \ldots, x_{\sigma(m+n)}')]_{M(e)}, \\
(f_e \cup_e g_e)(x_1', \ldots, x_{m+n}') 
&= \sum_{\sigma} \text{sign}(\sigma) \cdot [f_e(x_{\sigma(1)}', \ldots, x_{\sigma(m)}'),\ g_e(x_{\sigma(m+1)}', \ldots, x_{\sigma(m+n)}')]_{M(e)}.
\end{align*}

In each case, the sum is taken over all permutations \( \sigma \) of the set \( \{1, \ldots, m+n\} \) such that 
\[
\sigma(1) < \sigma(2) < \cdots < \sigma(m) \quad \text{and} \quad \sigma(m+1) < \cdots < \sigma(m+n).
\]
This operation is called the \it{cup product}.

Let
\[
b_{m+n}' : C^{m+n}_{C_p}(L, M) \longrightarrow C^{m+n+1}_{C_p}(L, M), \qquad
b_{m+n} : C^{m+n}_{e}(L, M) \longrightarrow C^{m+n+1}_{e}(L, M)
\]
denote the respective coboundary maps. Then, for all \( (f_{C_p}, f_e) \in C^m_{C_p}(L, M) \), \( (g_{C_p}, g_e) \in C^n_{C_p}(L, M) \), and \( f_e \in C^m_{e}(L, M) \), \( g_e \in C^n_{e}(L, M) \), similar to the computation in \cite{NR}, it is easy to verify the following identity:

\begin{equation}
b_{m+n}'(f_{C_p} \cup_{C_p} g_{C_p}) = b_{m}'(f_{C_p}) \cup_{C_p} g_{C_p} + (-1)^m f_{C_p} \cup_{C_p} b_{n}'(g_{C_p});
\end{equation}

\begin{equation}
b_{m+n}'(f_e \cup_{C_p} g_e) = b_{m}(f_e) \cup_{C_p} g_e + (-1)^m f_e \cup_{C_p} b_{n}(g_e);
\end{equation}

\begin{equation}
b_{m+n}(f_e \cup_e g_e) = b_{m}(f_e) \cup_e g_e + (-1)^m f_e \cup_e b_{n}(g_e).
\end{equation}

\begin{prop}
Let \( f = (f_{C_p}, f_e) \in Z^m_{C_p}(L, M) \) and \( g = (g_{C_p}, g_e) \in Z^n_{C_p}(L, M) \). Then their cup product
\[
f \cup_{C_p} g = (f_{C_p}, f_e) \cup_{C_p} (g_{C_p}, g_e) = (f_{C_p} \cup_{C_p} g_{C_p}, f_e \cup_{C_p} g_e)
\]
belongs to \( Z^{m+n}_{C_p}(L, M) \).
\begin{proof}
If \( f=(f_{C_{p}},f_{e}) \in Z^m_{C_p}(L, M) \), then

\begin{align*}
 \sum_{i=1}^{m+1} (-1)^{i+1} x_i' f_e(x_1', \ldots, \hat{x_i'}, \ldots, x_{m+1}') 
\quad + \sum_{i<j} (-1)^{i+j} f_e([x_i', x_j'], x_1', \ldots, \hat{x_i'}, \ldots, \hat{x_j'}, \ldots, x_{m+1}')=0
\end{align*}

\begin{align*}
 \sum_{i=1}^{m+1} (-1)^{i+1} x_i f_{C_p}(x_1, \ldots, \hat{x_i}, \ldots, x_{m+1}) 
\quad + \sum_{i<j} (-1)^{i+j} f_{C_p}([x_i, x_j], x_1, \ldots, \hat{x_i}, \ldots, \hat{x_j}, \ldots, x_{m+1})=0
\end{align*}

Compatibility conditions
\begin{align*}
C_M(f_e(x_1', \ldots, x_m')) &= f_e(C_L(x_1'), \ldots, C_L(x_m')) \\
r_M f_{C_p}(x_1, \ldots, x_m) &= f_e(r_L(x_1), \ldots, r_L(x_m)) \\
f_{C_p}(t_L(x_{1}'), x_2, \ldots, x_m) &= t_M(f_e(x_1', r_L(x_2), \ldots, r_L(x_m))) \\
f_{C_p}(x_1, \ldots, x_{m-1}, t_L(x_m')) &= t_M(f_e(r_L(x_1), \ldots, r_L(x_{m-1}), x_m')).
\end{align*}

Similarly, if \( g=(g_{C_{p}},g_{e}) \in Z^n_{C_p}(L, M) \), we have

\begin{align*}
 \sum_{i=1}^{n+1} (-1)^{i+1} x_i' g_e(x_1', \ldots, \hat{x_i'}, \ldots, x_{n+1}') 
\quad + \sum_{i<j} (-1)^{i+j} g_e([x_i', x_j'], x_1', \ldots, \hat{x_i'}, \ldots, \hat{x_j'}, \ldots, x_{n+1}')=0;
\end{align*}

\begin{align*}
 \sum_{i=1}^{n+1} (-1)^{i+1} x_i g_{C_p}(x_1, \ldots, \hat{x_i}, \ldots, x_{n+1}) \quad + \sum_{i<j} (-1)^{i+j} g_{C_p}([x_i, x_j], x_1, \ldots, \hat{x_i}, \ldots, \hat{x_j}, \ldots, x_{n+1})=0.
\end{align*}
For compatibility conditions, we have
\begin{align*}
C_M(g_e(x_1', \ldots, x_n')) &= g_e(C_L(x_1'), \ldots, C_L(x_n')) \\
r_M g_{C_p}(x_1, \ldots, x_n) &= g_e(r_L(x_1), \ldots, r_L(x_n)) \\
g_{C_p}(t_L(x_1'), x_2, \ldots, x_n) &= t_M(g_e(x_1', r_L(x_2), \ldots, r_L(x_n))) \\
g_{C_p}(x_1, \ldots, x_{n-1}, t_L(x_n')) &= t_M(g_e(r_L(x_1), \ldots, r_L(x_{n-1}), x_n')).
\end{align*}

Now, we compute
\begin{align*}
&r_M(f_{C_p} \cup_{C_p} g_{C_p})(x_1, \ldots, x_{m+n}) \\
&= \sum_\sigma \operatorname{sgn}(\sigma) r_M [f_{C_p}(x_{\sigma(1)}, \ldots, x_{\sigma(m)}), g_{C_p}(x_{\sigma(m+1)}, \ldots, x_{\sigma(m+n)})]_{M(C_p)} \\
&= \sum_\sigma \operatorname{sgn}(\sigma) [f_e(r_L(x_{\sigma(1)}), \ldots, r_L(x_{\sigma(m)})), g_e(r_L(x_{\sigma(m+1)}), \ldots, r_L(x_{\sigma(m+n)}))]_{M(e)} \\
&= (f_e \cup_e g_e)(r_L(x_1), \ldots, r_L(x_{m+n})).
\end{align*}

Thus,
\[
r_M(f_{C_p} \cup_{C_p} g_{C_p})(x_1, \ldots, x_{m+n}) = (f_e \cup_e g_e)(r_L(x_1), \ldots, r_L(x_{m+n})).
\]

Similarly,
\[
C_M(f_e \cup_{C_p} g_e)(x_1', \ldots, x_{m+n}') = (f_e \cup_{C_p} g_e)(C_L(x_1'), \ldots, C_L(x_{m+n}')).
\]

Now,
\begin{align*}
&t_M((f_e \cup_{C_p} g_e)(x_1', r_L(x_2), \ldots, r_L(x_{m+n}))) \\
&= \sum_\sigma \operatorname{sgn}(\sigma) t_M[f_e(x_{\sigma(1)}', r_L(x_{\sigma(2)}), \ldots, r_L(x_{\sigma(m)})), g_e(r_L(x_{\sigma(m+1)}), \ldots)]_{M(e)} \\
&= \sum_\sigma \operatorname{sgn}(\sigma)[f_{C_p}(t_L(x_{\sigma(1)}'), x_{\sigma(2)}, \ldots, x_{\sigma(m)}), g_{C_p}(x_{\sigma(m+1)}, \ldots)]_{M(C_p)} \\
&= (f_{C_p} \cup_{C_p} g_{C_p})(t_L(x_1'), x_2, \ldots, x_{m+n}).
\end{align*}

Hence,
\[
t_M((f_e \cup_{C_p} g_e)(x_1', r_L(x_2), \ldots, r_L(x_{m+n}))) = (f_{C_p} \cup_{C_p} g_{C_p})(t_L(x_1'), x_2, \ldots, x_{m+n}).
\]

Similarly,
\[
t_M((f_e \cup_{C_p} g_e)(r_L(x_1), \ldots, r_L(x_{m-1}), x_{m+n}')) = (f_{C_p} \cup_{C_p} g_{C_p})(x_1, \ldots, x_{m-1}, t_L(x_{m+n}')).
\]

Therefore, if \( f \in Z^m_{C_p}(L, M) \) and \( g \in Z^n_{C_p}(L, M) \), then by equations (3), (4), (5), and the above relations, \( f \cup_{C_p} g \) satisfies all the conditions for an \((m+n)\)-cocycle. That is,
\[
f \cup_{C_p} g = (f_{C_p}, f_e) \cup_{C_p} (g_{C_p}, g_e) = (f_{C_p} \cup_{C_p} g_{C_p}, f_e \cup_{C_p} g_e) \in Z^{m+n}_{C_p}(L, M).
\]
\end{proof}
\end{prop}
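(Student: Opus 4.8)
The plan is to verify that $f \cup_{C_p} g$ satisfies all the defining conditions of a $(m+n)$-cocycle in $Z^{m+n}_{C_p}(L,M)$, as listed in Remark~\ref{Rem1} (suitably generalized to arbitrary degree): namely (a) the Chevalley--Eilenberg cocycle condition for $f_e \cup_e g_e$ at level $e$, (b) the Chevalley--Eilenberg cocycle condition for $f_{C_p} \cup_{C_p} g_{C_p}$ at level $C_p$, (c) compatibility with conjugation, (d) compatibility with restriction, and (e)--(f) the two Frobenius relations with transfer. I would organize the proof into three blocks corresponding to these three types of conditions.

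First I would dispose of the cocycle conditions (a) and (b). Since $f=(f_{C_p},f_e) \in Z^m_{C_p}(L,M)$ means in particular $b_m'(f_{C_p})=0$ and $b_m(f_e)=0$, and similarly for $g$, the graded Leibniz identities (3), (4) already established in this section give directly
\[
b_{m+n}'(f_{C_p}\cup_{C_p}g_{C_p}) = b_m'(f_{C_p})\cup_{C_p}g_{C_p} + (-1)^m f_{C_p}\cup_{C_p}b_n'(g_{C_p}) = 0,
\]
and likewise $b_{m+n}(f_e\cup_e g_e)=0$ and $b_{m+n}'(f_e\cup_{C_p}g_e)=0$. So this block is essentially a citation of the Leibniz rule, which itself follows as in \cite{NR}.

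The second block handles conjugation-compatibility and restriction-compatibility. For restriction: expand $r_M\big((f_{C_p}\cup_{C_p}g_{C_p})(x_1,\ldots,x_{m+n})\big)$ using the definition of $\cup_{C_p}$ as a signed sum of brackets, push $r_M$ inside each bracket using that $r_M$ (being a pairing/module restriction map) is a Lie algebra homomorphism on $M$, then apply the hypotheses $r_M f_{C_p}(\cdots)=f_e(r_L(\cdots),\ldots)$ and $r_M g_{C_p}(\cdots)=g_e(r_L(\cdots),\ldots)$ term by term; the resulting signed sum is precisely $(f_e\cup_e g_e)(r_L(x_1),\ldots,r_L(x_{m+n}))$. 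The conjugation identity is verified identically, using that $c_M$ is a Lie algebra automorphism and the conjugation-equivariance of $f_e,g_e$. Both arguments are the ones already written out in the proof sketch above and require only bookkeeping.

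The third block — the Frobenius relations with the transfer map — is the main obstacle, and the place where one must be careful. Here I would take the mixed cup product $f_e\cup_{C_p}g_e$ evaluated on an argument of the form $(x_1', r_L(x_2),\ldots,r_L(x_{m+n}))$, apply $t_M$, move $t_M$ inside each summand using the Frobenius axiom (ix) for the pairing on $M$ (which lets one convert $t_M[f_e(x_1',\ldots),\,g_e(r_L(\cdots),\ldots)]$ into a bracket in $M(C_p)$ once one recognizes $g_e(r_L(\cdots),\ldots) = r_M g_{C_p}(\cdots)$), and then use the Frobenius hypothesis on $f$, namely $f_{C_p}(t_L(x_1'),x_2,\ldots)=t_M(f_e(x_1',r_L(x_2),\ldots))$, to identify the sum with $(f_{C_p}\cup_{C_p}g_{C_p})(t_L(x_1'),x_2,\ldots,x_{m+n})$. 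The subtlety is matching up the permutation sums: one must check that the set of shuffles $\sigma$ with $\sigma(1)<\cdots<\sigma(m)$, $\sigma(m+1)<\cdots<\sigma(m+n)$ is preserved under this manipulation and that signs are unaffected, and also split into the cases according to whether the transferred argument $x_1'$ (or $x_{m+n}'$, for the second Frobenius relation) lands in the first or second block of the shuffle — in the block where it does not appear as the transferred slot, one instead uses $g_e(r_L(\cdots))=r_M g_{C_p}(\cdots)$ together with the Frobenius axiom on the $M$-pairing to absorb the transfer. Once these case distinctions are set up cleanly the computation closes, and combining all three blocks shows $f\cup_{C_p}g \in Z^{m+n}_{C_p}(L,M)$.
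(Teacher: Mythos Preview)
Your proposal is correct and follows essentially the same route as the paper's proof: both invoke the graded Leibniz identities (3)--(5) to handle the coboundary conditions, and both verify the restriction, conjugation, and Frobenius compatibilities by expanding the shuffle sum and pushing the structure maps through term by term. Your third block is in fact more careful than the paper's treatment, which writes the Frobenius computation as though the transferred argument always lands in the first slot of the $f$-block and does not explicitly address the shuffles where it lands in the $g$-block; your observation that a case split (together with the Frobenius axiom for the $M$-pairing) is needed there is a genuine refinement of the paper's somewhat elliptical argument.
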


\begin{prop}
If furthermore either \( f \in B^{m}_{C_{p}}(L,M) \) or \( g \in B^{n}_{C_{p}}(L,M) \), then \( f \cup_{C_{p}} g \in B^{m+n}_{C_{p}}(L,M) \).
\end{prop}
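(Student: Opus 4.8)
The plan is to use the derivation-type identity for the cup product together with the fact that coboundaries are, by definition, images under $b'$. Suppose without loss of generality that $f \in B^{m}_{C_p}(L,M)$, so that $f = b'_{m-1}(h)$ for some $h = (h_{C_p}, h_e) \in C^{m-1}_{C_p}(L,M)$; the case $g \in B^{n}_{C_p}(L,M)$ is symmetric, modulo the sign $(-1)^m$ appearing in the Leibniz rule. By the previous proposition, $g \in Z^{n}_{C_p}(L,M)$ guarantees that $g$ is a cocycle, hence $b'_{n}(g) = 0$. Applying the derivation identities — equations (1), (2), (3) in the cup-product section — to the pair $(h, g)$ gives
\[
b'_{m-1+n}(h \cup_{C_p} g) = b'_{m-1}(h) \cup_{C_p} g + (-1)^{m-1} h \cup_{C_p} b'_{n}(g) = f \cup_{C_p} g + 0 = f \cup_{C_p} g.
\]
This exhibits $f \cup_{C_p} g$ as $b'_{m+n-1}$ applied to $h \cup_{C_p} g \in C^{m+n-1}_{C_p}(L,M)$, hence as an element of $B^{m+n}_{C_p}(L,M)$.

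The one point that genuinely requires care, rather than being a direct citation of the Leibniz rule, is verifying that $h \cup_{C_p} g$ is a legitimate element of $C^{m+n-1}_{C_p}(L,M)$ — that is, that the pair $(h_{C_p} \cup_{C_p} g_{C_p}, h_e \cup_{C_p} g_e)$ actually satisfies the compatibility conditions (commutation with restriction and conjugation, and the Frobenius relations with transfer) that define $C_p\text{-}\mathrm{HOM}(\wedge^{m+n-1} L, M)$. This is exactly the content of the compatibility computations carried out in the proof of the previous proposition, and those computations only used that $h$ and $g$ are cochains in $C^{\bullet}_{C_p}(L,M)$ together with the fact that restriction, transfer, and conjugation are Lie algebra maps (and the module analogues); they did not use the cocycle condition. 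So the same verification applies verbatim to $h \cup_{C_p} g$, and then the three displayed Leibniz identities (1)–(3) for $b'$ are available to be applied to $h$ and $g$.

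Thus the main obstacle is not conceptual but bookkeeping: one must (i) record that $h \cup_{C_p} g$ lands in the correct cochain group, reusing the compatibility argument from the preceding proof; (ii) invoke equations (1)–(3) with the correct shift of degrees ($m-1$ in place of $m$); and (iii) handle the symmetric case where instead $g = b'_{n-1}(k)$, where the identity reads $b'_{m+n-1}(f \cup_{C_p} k) = b'_m(f)\cup_{C_p} k + (-1)^m f \cup_{C_p} b'_{n-1}(k) = (-1)^m f \cup_{C_p} g$, so that $f \cup_{C_p} g = (-1)^m b'_{m+n-1}(f \cup_{C_p} k) \in B^{m+n}_{C_p}(L,M)$ since $B^{\bullet}_{C_p}(L,M)$ is an $R$-submodule. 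In both cases, combined with the previous proposition, this shows the cup product descends to a well-defined pairing $H^m_{C_p}(L,M) \times H^n_{C_p}(L,M) \to H^{m+n}_{C_p}(L,M)$ on cohomology.
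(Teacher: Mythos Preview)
Your argument is correct and is exactly what the paper's one-line proof (``The proof follows similarly to the preceding proposition'') is gesturing at: apply the Leibniz identities (1)--(3) with one factor replaced by a preimage under $b'$, and use that the compatibility checks (restriction, conjugation, Frobenius) from the preceding proof depend only on $h$ and $g$ being cochains, not cocycles. Your write-up is substantially more explicit than the paper's, but the approach is the same.
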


\begin{proof}
The proof follows similarly to the preceding proposition.
\end{proof}

\begin{Rem}
If \( f \in Z^{m}_{K}(L,M) \) and \( g \in Z^{n}_{K}(L,M) \) for \( K \leq C_{P} \), then \( f \cup_{K} g \in Z^{m+n}_{K}(L,M) \). Furthermore, if either \( f \in B^{m}_{K}(L,M) \) or \( g \in B^{n}_{K}(L,M) \), then \( f \cup_{K} g \in B^{m+n}_{K}(L,M) \). It follows that there is an induced cup product structure
\[
\cup_{K}:H^{m}_{K}(L,M) \times H^{n}_{K}(L,M) \to H^{m+n}_{K}(L,M),
\]
making the direct sum \( H^{*}_{K}(L,M) = \bigoplus_{n} H^{n}_{K}(L,M) \) a graded Lie algebra for all \( K \leq C_{p} \).
\end{Rem}

\begin{prop}
The cup product also satisfies all pairing conditions.
\end{prop}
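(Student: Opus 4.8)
The plan is to verify directly that the cup product $\cup_{C_p}$ (and similarly $\cup_e$ and $\cup_{C_p}$ on the $e$-components) satisfies the pairing axioms (vii), (viii), (ix) from the Pairing Notation subsection, specializing them to the two-subgroup lattice $e \leq C_p$. Concretely, writing $\cup$ for the pairing on the cochain Mackey functor $C^{*}(L,M)$, I must check: (vii) $r$ commutes with $\cup$, i.e. $R\big((f_{C_p},f_e)\cup_{C_p}(g_{C_p},g_e)\big) = R(f_{C_p},f_e)\cup_e R(g_{C_p},g_e)$; (viii) the conjugation action $c_g$ on the $e$-component is multiplicative for $\cup_e$; and (ix) the two Frobenius identities relating $t$, $r$, and $\cup$. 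The key observation is that \emph{all three of these were essentially already proved inside the previous proposition}: the computation there shows $r_M(f_{C_p}\cup_{C_p}g_{C_p}) = (f_e\cup_e g_e)(r_L(-),\dots)$, which is precisely axiom (vii); the displayed identity $C_M(f_e\cup_{C_p}g_e)(\dots) = (f_e\cup_{C_p}g_e)(C_L(-),\dots)$ is axiom (viii); and the two displayed $t_M$-identities are exactly the two halves of the Frobenius axiom (ix). So the proof is largely a matter of re-packaging those calculations as verifications of the pairing axioms rather than of the cocycle conditions.

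\textbf{Steps, in order.} First I would restate, for reference, the pairing conditions (vii)--(ix) in the present notation, with $M(H)$, $N(H)$, $L(H)$ all taken to be the cochain modules $C^{*}_H(L,M)$ and the maps $r^{C_p}_e = R$, $t^{C_p}_e = T$, $c_g$ as defined in the definition of $C_p$-$\mathrm{HOM}(\wedge^n L_1, L_2)$. Second, for axiom (vii), I would reproduce (or simply cite) the chain of equalities from the preceding proposition establishing $r_M(f_{C_p}\cup_{C_p}g_{C_p}) = (f_e\cup_e g_e)\circ(r_L\times\cdots\times r_L)$, which rests only on the facts that $r_L$ is a Lie homomorphism and $r_M$ is multiplicative for the bracket on $M$. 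Third, for axiom (viii), I would spell out that $c_g$ acts on $\theta\in\mathrm{Hom}(\wedge^n L(e),M(e))$ by $({}^g\theta)(y)=g\cdot\theta(g^{-1}\cdot y)$, and then compute $({}^g(f_e\cup_e g_e))(x_1',\dots,x_{m+n}')$, pushing $c_g^{-1}$ through the permutation sum and using that $c_{g,M}$ is a Lie-algebra (bracket) homomorphism on $M(e)$ and $c_{g,L}$ on $L(e)$; the sign$(\sigma)$ factors are untouched since $c_g$ acts diagonally. Fourth, for axiom (ix), I would carry out the two Frobenius computations: starting from $T\big(f_e\cup_e g_e(\,\cdot\, , r_L(-),\dots)\big)$, unfold the definition of $T$ (which applies $t^{C_p}_e$ after $\gamma$ after $r^{C_p}_e$), use the Frobenius relations for $L$ (the bracket-transfer compatibility, Definition~3.1(3)) and for the module $M$, to collapse it to $(f_{C_p}\cup_{C_p}g_{C_p})(t_L(-),\dots)$; the symmetric identity with the transfer in the last slot is identical up to reindexing. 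Finally I would remark that the $\cup_e$ pairing on the bottom row is the ordinary cup product of Chevalley--Eilenberg cochains, so its pairing-type properties are classical.

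\textbf{The main obstacle} I anticipate is bookkeeping rather than conceptual: in axiom (ix) the transfer map $T$ is not simply post-composition with $t^{C_p}_e$ but the more elaborate pair $(\mu,\delta)$ with $\mu(a)=t^{C_p}_e\circ\gamma\circ r^{C_p}_e(a)$ and $\delta$ a sum of conjugates, so one must check the Frobenius identity compatibly on \emph{both} the $C_p$-component and the $e$-component, and track how the shuffle sum interacts with the $r^{C_p}_e$ that turns an $e$-argument into a $C_p$-argument. The care needed is to confirm that applying $r_L$ to the remaining $m+n-1$ slots (as demanded by the Frobenius axiom) exactly matches the $r^{C_p}_e$ appearing in $\mu$, and that only the single distinguished slot carries the transfer --- which is guaranteed because $\cup_{C_p}$ is $R$-multilinear and the Frobenius relation for $L$ is applied slot-by-slot. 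Once this alignment is made explicit, each of (vii), (viii), (ix) reduces to the identities already displayed in the previous proposition's proof, so I would keep the write-up short and refer back to those computations rather than repeating them in full.
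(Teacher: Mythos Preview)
Your overall strategy---verify (vii), (viii), (ix) for the cochain Mackey functor---matches the paper's. But your framing contains a conflation that, if carried into the write-up, would make the argument wrong in places.

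You claim the pairing axioms ``were essentially already proved inside the previous proposition,'' identifying (vii) with the identity $r_M(f_{C_p}\cup_{C_p}g_{C_p})=(f_e\cup_e g_e)(r_L(-),\dots)$ and (ix) with the displayed $t_M$-identities there. This is not right: those identities live at a different level. They show that the \emph{pair} $(f_{C_p}\cup g_{C_p},\,f_e\cup g_e)$ satisfies the compatibility conditions required of an element of $\mathrm{Hom}_{\mathrm{Mack}(C_p)}(\wedge^{m+n}L,M)$, i.e.\ they are conditions on how $r_M,t_M,c_M$ interact with the \emph{values} of the cup-product cochain. The pairing axioms (vii)--(ix), by contrast, are conditions on how the structure maps $R,T,\overline{C}$ of the cochain Mackey functor $C_p\text{-}\mathrm{HOM}(\wedge^\ast L,M)$ interact with the operation $\cup$. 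Concretely: axiom (vii) is $R_{m+n}\big((f_{C_p},f_e)\cup_{C_p}(g_{C_p},g_e)\big)=R_m(f_{C_p},f_e)\cup_e R_n(g_{C_p},g_e)$, and since $R$ is just ``take the $e$-component,'' this is immediate from the \emph{definition} of $\cup_{C_p}$ on pairs---no $r_M$ computation is needed or relevant. The paper's proof of (vii) is exactly this one-line observation.

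For (ix) the distinction matters more. The Frobenius pairing axiom is $T_m(f_e)\cup_{C_p}(g_{C_p},g_e)=T_{m+n}\big(f_e\cup_e R_n(g_{C_p},g_e)\big)$, an equality of \emph{pairs}, so both the $C_p$-component (built from $t_M\circ(-)\circ r_L^{m+n}$) and the $e$-component (built from $\sum_{i+j=p}c_M^i\circ(-)\circ c_L^j$) must be checked separately. The paper does both; your obstacle paragraph correctly flags this. The ingredients are not the Frobenius relations for $L$ as you suggest in step four, but rather (a) the Frobenius relation for the Green functor $M$, used on the $C_p$-component, together with the Mackey-compatibility $g_e\circ r_L^n=r_M\circ g_{C_p}$ of the cochain $g$; and (b) the conjugation-equivariance $c_M^i\circ g_e\circ c_L^j=c_M^{i+j}\circ g_e=g_e$ (since $i+j=p$) on the $e$-component. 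Your steps three and four are close to the right computations, but you should drop the claim that they are citations of the previous proposition and instead carry them out directly with $R,T,\overline{C}$ as the paper does.
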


\begin{proof}
Let \( f = (f_{C_{p}}, f_{e}) \in C^{m}_{C_{P}}(L,M) \), where \( f_{e} \in C^{m}_{e}(L,M) \), and \( g = (g_{C_{p}}, g_{e}) \in C^{n}_{C_{P}}(L,M) \), where \( g_{e} \in C^{n}_{e}(L,M) \). Consider the morphisms
\[
\begin{aligned}
R_{m} &: C^{m}_{C_{p}}(L,M) \to C^{m}_{e}(L,M), \quad & T_{m} &: C^{m}_{e}(L,M) \to C^{m}_{C_{p}}(L,M), \quad &
\overline{C}^{m}&: C^{m}_{e}(L,M) \to C^{m}_{e}(L,M).
\end{aligned}
\]

We illustrate the naturality using the diagram below

\[
\begin{tikzcd}
	C^{m}_{C_{p}}(L,M) & C^{n}_{C_{p}}(L,M) &&& C^{m+n}_{C_{p}}(L,M) \\
	\\
	C^{m}_{e}(L,M) & C^{n}_{e}(L,M) &&& C^{m+n}_{e}(L,M)
	\arrow["\times"{description}, draw=none, from=1-1, to=1-2]
	\arrow["R_{m}"', curve={height=18pt}, from=1-1, to=3-1]
	\arrow["\cup_{C_{P}}", from=1-2, to=1-5]
	\arrow["R_{n}", curve={height=18pt}, from=1-2, to=3-2]
	\arrow["R_{m+n}"', curve={height=18pt}, from=1-5, to=3-5]
	\arrow["T_{m}", curve={height=18pt}, from=3-1, to=1-1]
	\arrow["\overline{C}^{m}", from=3-1, to=3-1, loop, in=235, out=305, distance=10mm]
	\arrow["\times"{description}, draw=none, from=3-1, to=3-2]
	\arrow["T_{n}"', curve={height=18pt}, from=3-2, to=1-2]
	\arrow["\overline{C}^{n}"', from=3-2, to=3-2, loop, in=305, out=235, distance=10mm]
	\arrow["\cup_{e}"', from=3-2, to=3-5]
	\arrow["T_{m+n}", curve={height=18pt}, from=3-5, to=1-5]
	\arrow["\overline{C}^{m+n}"', from=3-5, to=3-5, loop, in=305, out=235, distance=10mm]
\end{tikzcd}
\]

From the definition of the restriction map \( R_{m+n} \), we get
\[
R_{m+n}((f_{C_{p}}, f_{e}) \cup_{C_{p}} (g_{C_{p}}, g_{e})) = R_{m+n}(f_{C_{P}} \cup_{C_{p}} g_{C_{p}}, f_{e} \cup_{e} g_{e}) = f_{e} \cup_{e} g_{e}.
\]

Also, from the definitions of \( R_m \) and \( R_n \), we have
\[
(R_{m}(f_{C_{p}}, f_{e})) \cup_{e} (R_{n}(g_{C_{P}}, g_{e})) = f_{e} \cup_{e} g_{e}.
\]

Thus,
\[
R_{m+n}((f_{C_{p}}, f_{e}) \cup_{C_{p}} (g_{C_{p}}, g_{e})) = (R_{m}(f_{C_{P}}, f_{e})) \cup_{e} (R_{n}(g_{C_{P}}, g_{e})).
\]

Now, using the definition of the conjugation map \( \overline{C}^{m+n}\), we have
\[
\overline{C}^{m+n}(f_{e} \cup_{e} g_{e})(x_1', \dots, x_{m+n}') = C^{M}(f_{e} \cup_{e} g_{e}) C^{m+n}(x_1', \dots, x^{'}_{m+n}).
\]

Similarly, applying \( \overline{C}^{m} \) and \( \overline{C}^{n} \),
\begin{align*}
&\overline{C}^{m}(f_{e})\cup_{e}\overline{C}^{n}(g_{e})(x_{1}^{'},\cdots, x^{'}_{m+n})\\
&=(C^{M}f_{e}C^{m}\cup_{e}C^{M}g_{e}C^{n})(x_{1}^{'},\cdots, x^{'}_{m+n})\\
&=\sum_{\sigma}sgn(\sigma)[C^{M}f_{e}C^{m}(x_{\sigma(1)},\cdots,x_{\sigma(m)}),C^{M}g_{e}C^{n}(x_{\sigma(m+1)},\cdots,x_{\sigma(m+n)})]_{M(e)}\\
&=\sum_{\sigma}sgn(\sigma)C^{M}[f_{e}(C(x_{\sigma(1)}),\cdots,C(x_{\sigma(m)})),g_{e}(C(x_{\sigma(m+n)}),\cdots,C(x_{\sigma(m+n)}))]_{M(e)}\\
&=C^{M}\sum_{\sigma}sgn(\sigma)[f_{e}(C(x_{\sigma(1)}),\cdots,C(x_{\sigma(m)})),g_{e}(C(x_{\sigma(m+n)}),\cdots,C(x_{\sigma(m+n)}))]_{M(e)}\\
&=C^{M}(f_{e}\cup_{e}g_{e})(C(x_{1}),\cdots,C(x_{m}),C(x_{m+1}),\cdots,C(x_{m+n}))\\
&=C^{M}(f_{e}\cup_{e}g_{e})C^{m+n}(x^{'}_{1},\cdots,x^{'}_{m+n}).
\end{align*}

 We arrive at
\[
\overline{C}^{m}(f_{e}) \cup_{e} \overline{C}^{n}(g_{e}) = \overline{C}^{m+n}(f_{e} \cup_{e} g_{e}),
\]
showing compatibility with conjugation.

Next, using the definition of the transfer map \( T_m \), we have
\begin{align*}
&T_{m}(f_{e})\cup_{C_{p}}(g_{C_{p}},g_{e})\\
&=(t_{M}f_{e}r_{L}^{m},\sum_{i+j=p}C^{i}_{M}f_{e}C^{j}_{L})\cup_{C_{p}}(g_{C_{p}},g_{e})\\
&=(t_{M}f_{e}r_{L}^{m}\cup_{C_{p}}g_{C_{p}},\sum_{i+j=p}C^{i}_{M}f_{e}C^{j}_{L}\cup_{e}g_{e}).
\end{align*}
Now for $(x_{1},\cdots,x_{m+n})\in \wedge^{m+n}L(C_{p})$, we have
\begin{align*}
&(t_{M}f_{e}r_{L}^{m}\cup_{C_{p}}g_{C_{p}})(x_{1},\cdots,x_{m+n})\\
&=\sum_{\sigma}sgn(\sigma)[t_{M}f_{e}r_{L}^{m}(x_{\sigma(1)},\cdots,x_{\sigma(m)}),g_{C_{p}}(x_{\sigma(m+1)},\cdots,x_{\sigma(m+n)})]_{C_{p}}
\end{align*}
For $(x_{1}^{'},\cdots,x_{m+n}^{'})\in \wedge^{m+n}L(e),$ we have
\begin{align*}
&(\sum_{i+j=p}C^{i}_{M}f_{e}C^{j}_{L}\cup_{e}g_{e})(x_{1}^{'},\cdots,x_{m+n}^{'})\\
&=\sum_{\sigma}sgn(\sigma)[\sum_{i+j=P}C^{i}_{M}f_{e}C^{j}_{L}(x^{'}_{\sigma(1)},
\cdots,x^{'}_{\sigma(m)}),g_{e}(x_{\sigma(m+1)}^{'}\cdots,x^{'}_{\sigma(m+n)})]_{M(e)}\\
&=\sum_{\sigma}sgn(\sigma)\sum_{i+j=P}[C^{i}_{M}f_{e}C^{j}_{L}(x^{'}_{\sigma(1)},
\cdots,x^{'}_{\sigma(m)}),g_{e}(x_{\sigma(m+1)}^{'}\cdots,x^{'}_{\sigma(m+n)})]_{M(e)}\\
&=\sum_{i+j=p}\sum_{\sigma}sgn(\sigma)[C^{i}_{M}f_{e}C^{j}_{L}(x^{'}_{\sigma(1)},
\cdots,x^{'}_{\sigma(m)}),g_{e}(x_{\sigma(m+1)}^{'}\cdots,x^{'}_{\sigma(m+n)})]_{M(e)}.
\end{align*}
The definition of transfer map $T_{m+n}$ and restriction map $R_{n}$, we get:
\begin{align*}
&T_{m+n}(f_{e}\cup_{e}R_{n}(g_{C_{p}},g_{e}))\\
&=T_{m+n}(f_{e}\cup_{e}g_{e})\\
&=(t_{M}(f_{e}\cup_{e}g_{e})r_{L}^{m+n},\sum_{i+j=p}C^{i}_{M}(f_{e}\cup_{e}g_{e})C^{j}_{L})
\end{align*}
Now for $(x_{1},\cdots,x_{m+n})\in \wedge^{m+n}L(C_{p})$, we get:
\begin{align*}
&t_{M}(f_{e}\cup_{e}g_{e})r_{L}^{m+n}(x_{1},\cdots,x_{m+n})\\
&=t_{M}(f_{e}\cup_{e}g_{e})(r_{L}(x_{1}),\cdots,r_{L}(x_{m}),\cdots,r_{L}(x_{m+n}))\\
&=t_{M}(\sum_{\sigma}sgn(\sigma)[f_{e}r_{L}^{m}(x_{\sigma(1)},\cdots,x_{\sigma(m)}),g_{e}r_{L}^{n}(x_{\sigma(m+1)},\cdots,x_{\sigma(m+n)})]_{M(e)})\\
&=\sum_{\sigma}sgn(\sigma)t_{M}[f_{e}r_{L}^{m}(x_{\sigma(1)},\cdots,x_{\sigma(m)}),r_{M}^{n}g_{C_{p}}(x_{\sigma(m+1)},\cdots,x_{\sigma(m+n)})]_{M(e)}\\
&=\sum_{\sigma}sgn(\sigma)[t_{M}f_{e}r_{L}^{m}(x_{\sigma(1)},\cdots,x_{\sigma(m)}),g_{C_{p}}(x_{\sigma(m+1)},\cdots,x_{\sigma(m+n)})]_{M_{C_{p}}}
\end{align*}
Now for $(x_{1}^{'},\cdots,x_{m+n}^{'})\in \wedge^{m+n}L(e),$ we have
\begin{align*}
&\sum_{i+j=p}C^{i}_{M}(f_{e}\cup_{e}g_{e})C^{j}_{L}(x_{1}^{'},\cdots,x_{m+n}^{'})\\
&=\sum_{i+j=p}C^{i}_{M}(\sum_{\sigma}sgn(\sigma)[f_{e}C^{j}_{L}(x_{\sigma(1)}^{'},\cdots,x_{\sigma(m)}^{'}),g_{e}C_{L}^{j}(x_{\sigma(m+1)},\cdots,x_{\sigma(m+n)}^{'})]_{M(e)})\\
&=\sum_{i+j=p}\sum_{\sigma}sgn(\sigma)[C^{i}_{M}f_{e}C_{L}^{j}(x_{\sigma(1)}^{'},\cdots,x^{'}_{\sigma(m)}),C_{M}^{i}g_{e}C^{j}_{L}(x_{\sigma(m+1)}^{'},\cdots,x^{'}_{\sigma(m+n)})]_{M(e)}\\
&=\sum_{i+j=p}\sum_{\sigma}sgn(\sigma)[C^{i}_{M}f_{e}C_{L}^{j}(x_{\sigma(1)}^{'},\cdots,x^{'}_{\sigma(m)}),C_{M}^{i}C^{j}_{M}g_{e}(x_{\sigma(m+1)}^{'},\cdots,x^{'}_{\sigma(m+n)})]_{M(e)}\\
&=\sum_{i+j=p}\sum_{\sigma}sgn(\sigma)[C^{i}_{M}f_{e}C_{L}^{j}(x_{\sigma(1)}^{'},\cdots,x^{'}_{\sigma(m)}),g_{e}(x_{\sigma(m+1)}^{'},\cdots,x^{'}_{\sigma(m+n)})]_{M(e)}.
\end{align*}
We compute
\[
T_{m}(f_{e}) \cup_{C_{p}} (g_{C_{p}}, g_{e}) = T_{m+n}(f_{e} \cup_{e} R_{n}(g_{C_{p}}, g_{e})).
\]
Similarly, one shows that
\[
(f_{C_{p}}, f_{e}) \cup_{C_{p}} T_{n}(g_{e}) = T_{m+n}(R_{m}(f_{C_{P}}, f_{e}) \cup_{e} g_{e}).
\]

Hence, the cup product satisfies all the pairing conditions.
\end{proof}

\begin{Rem}
The induced cup product on the graded cohomology \( H^{*}_{K}(L, M) = \bigoplus_{n} H^{n}_{K}(L, M) \), defined for all \( K \leq C_p \), endows \( H^{*}(L, M) \) with the structure of a \( C_p \)-Green functor of graded Lie type.
\end{Rem}

\section{Classification of Singular Extension of $C_{p}$-Green Functors of Lie type}\label{sec7}
It is a well-known result that the second Chevalley-Eilenberg cohomology classifies singular extensions of Lie algebras \cite{che}. In this section, we obtain an analogous result for \( C_p \)-Green functors of Lie type.

\begin{Def}
Let \( L \) be a \( C_p \)-Green functor of Lie type, and let \( M \) be an \( L \)-bimodule. A \textbf{singular extension} \( E \) of \( L \) by \( M \) is an exact sequence of Mackey functors:
\[
\begin{tikzcd}
E: 0 \arrow{r} & M \arrow{r}{i} & B \arrow{r}{j} & L \arrow{r} & 0
\end{tikzcd}
\]
where \( B \) is a \( C_p \)-Green functor of Lie type, \( j \) is a morphism of \( C_p \)-Green functors of Lie type, and \( i \) is a morphism of \( C_p \)-Mackey functors. Furthermore, for all subgroups of \( C_p \), the following sequences:
\[
\begin{tikzcd}
0 \arrow{r} & M(C_p) \arrow{r}{i_{C_p}} & B(C_p) \arrow{r}{j_{C_p}} & L(C_p) \arrow{r} & 0
\end{tikzcd}
\]
and
\[
\begin{tikzcd}
0 \arrow{r} & M(e) \arrow{r}{i_e} & B(e) \arrow{r}{j_e} & L(e) \arrow{r} & 0
\end{tikzcd}
\]
are singular extensions of Lie algebras: \( L(C_p) \) by \( M(C_p) \), and \( L(e) \) by \( M(e) \), respectively.
\end{Def}

\begin{Def}
A singular extension of a \( C_p \)-Green functor of Lie type
\[
\begin{tikzcd}
E: 0 \arrow{r} & M \arrow{r}{i} & B \arrow{r}{j} & L \arrow{r} & 0
\end{tikzcd}
\]
is called \textbf{\( M \)-split} if for the subgroups \( e \) and \( C_p \) of \( C_p \), there exist Lie algebra homomorphisms
\[
s_{C_p} = s(C_p) : L(C_p) \longrightarrow B(C_p), \quad \text{and} \quad s_e = s(e) : L(e) \longrightarrow B(e)
\]
such that:
\begin{enumerate}
    \item[(i)] \( j_{C_p} \circ s_{C_p} = \mathrm{id}_{L(C_p)} \) and \( j_e \circ s_e = \mathrm{id}_{L(e)} \),
    \item[(ii)] The maps \( s_{C_p} \) and \( s_e \) are compatible with the transfer, restriction, and conjugation maps in the following sense:
    \begin{enumerate}
        \item[(a)] \( r_B \circ s_{C_p} = s_e \circ r_L \),
        \item[(b)] \( t_B \circ s_e = s_{C_p} \circ t_L \),
        \item[(c)] \( c_B \circ s_e = s_e \circ c_L \).
    \end{enumerate}
\end{enumerate}
\end{Def}

\begin{Def}
Let
\[
\begin{tikzcd}
E_f: 0 \arrow{r} & M \arrow{r}{i} & B_f \arrow{r}{j} & L \arrow{r} & 0
\end{tikzcd}
\quad \text{and} \quad
\begin{tikzcd}
E_g: 0 \arrow{r} & M \arrow{r}{i'} & B_g \arrow{r}{j'} & L \arrow{r} & 0
\end{tikzcd}
\]
be two extensions of \( L \) by \( M \). We say that \( E_f \) and \( E_g \) are \textbf{equivalent extensions} if there exists a natural isomorphism \( \beta: B_f \to B_g \) that commutes with the restriction and transfer maps.

In other words, there exists a pair of Lie algebra homomorphisms \( (\beta_{C_p}, \beta_e) \) such that the following diagram commutes:
\[\begin{tikzcd}
	0 & {M(C_{p})} && {B_{f_{C_{p}}}(C_{p})} && {L(C_{p})} && 0 \\
	0 && {M(C_{p})} && {B_{g_{C_{p}}}(C_{p})} && {L(C_{p})} & 0 \\
	0 & {M(e)} && {B_{f_{e}}(e)} && {L(e)} && 0 \\
	0 && {M(e)} && {B_{g_{e}}(e)} && {L(e)} & 0
	\arrow[from=1-1, to=1-2]
	\arrow["i_{C_{p}}", from=1-2, to=1-4]
	\arrow["r_{M}"{description, pos=0.7}, curve={height=-18pt}, from=1-2, to=3-2]
	\arrow["j_{C_{p}}", from=1-4, to=1-6]
	\arrow["\beta_{C_{p}}"{pos=0.6}, from=1-4, to=2-5]
	\arrow["r_{B}"{description, pos=0.7}, curve={height=-18pt}, from=1-4, to=3-4]
	\arrow[from=1-6, to=1-8]
	\arrow[equals, from=1-6, to=2-7]
	\arrow["r_{L}"{description, pos=0.7}, curve={height=-18pt}, from=1-6, to=3-6]
	\arrow[from=2-1, to=2-3]
	\arrow[equals, from=2-3, to=1-2]
	\arrow["{i^{'}_{C_{P}}}", from=2-3, to=2-5]
	\arrow["r_{M}"{description, pos=0.7}, curve={height=-18pt}, from=2-3, to=4-3]
	\arrow["{j^{'}_{C_{p}}}", from=2-5, to=2-7]
	\arrow["r_{B}"{description, pos=0.7}, curve={height=-18pt}, from=2-5, to=4-5]
	\arrow[from=2-7, to=2-8]
	\arrow["r_{L}"{description, pos=0.7}, curve={height=-18pt}, from=2-7, to=4-7]
	\arrow[from=3-1, to=3-2]
	\arrow["t_{M}"{description, pos=0.7}, curve={height=-18pt}, from=3-2, to=1-2]
	\arrow["i_{e}", from=3-2, to=3-4]
	\arrow[equals, from=3-2, to=4-3]
	\arrow["t_{B}"{description, pos=0.7}, curve={height=-18pt}, from=3-4, to=1-4]
	\arrow["j_{e}", from=3-4, to=3-6]
	\arrow["\beta_{e}"'{pos=0.4}, shorten >=6pt, from=3-4, to=4-5]
	\arrow["t_{L}"{description, pos=0.7}, curve={height=-18pt}, from=3-6, to=1-6]
	\arrow[from=3-6, to=3-8]
	\arrow[equals, from=3-6, to=4-7]
	\arrow[from=4-1, to=4-3]
	\arrow["t_{M}"{description, pos=0.7}, curve={height=-18pt}, from=4-3, to=2-3]
	\arrow["{i^{'}_{e}}", from=4-3, to=4-5]
	\arrow["t_{B}"{description, pos=0.7}, curve={height=-18pt}, from=4-5, to=2-5]
	\arrow["{j^{'}_{e}}", from=4-5, to=4-7]
	\arrow["t_{L}"{description, pos=0.7}, curve={height=-18pt}, from=4-7, to=2-7]
	\arrow[from=4-7, to=4-8]
\end{tikzcd}\]
\end{Def}
\begin{notation}
Let \( L \) be a \( C_p \)-Green functor of Lie type, and let \( M \) be an \( L \)-bimodule. We define \( \operatorname{Ext}(L, M) \) to be the set of equivalence classes of \( M \)-split extensions of \( L \) by \( M \).
\end{notation}

\begin{prop}\label{Prob2}
Let \( L \) be a \( C_p \)-Green functor of Lie type, and \( M \) be an \( L \)-bimodule Mackey functor. Consider an \( M \)-split extension of \( L \) by \( M \):
\[
\begin{tikzcd}
0 \arrow{r} & M \arrow{r}{i} & B \arrow{r}{j} & L \arrow[bend left=15]{l}{s} \arrow{r} & 0
\end{tikzcd}
\]
Then, for the subgroups \( e \) and \( C_p \) of \( C_p \), the following identities hold:
\[
i_{C_p}(xu) = [s_{C_p}(x), i_{C_p}(u)]_{B(C_p)} \quad \text{for all } x \in L(C_p),~ u \in M(C_p),
\]
\[
i_e(am) = [s_e(a), i_e(m)]_{B(e)} \quad \text{for all } a \in L(e),~ m \in M(e).
\]
\begin{proof}
Since the extension is \( M \)-split, we have
\[
\begin{tikzcd}
0 \arrow{r} & M(C_p) \arrow{r}{i_{C_p}} & B(C_p) \arrow{r}{j_{C_p}} & L(C_p) \arrow[bend left=15]{l}{s_{C_p}} \arrow{r} & 0,
\end{tikzcd}
\quad
\begin{tikzcd}
0 \arrow{r} & M(e) \arrow{r}{i_e} & B(e) \arrow{r}{j_e} & L(e) \arrow[bend left=15]{l}{s_e} \arrow{r} & 0.
\end{tikzcd}
\]

We verify the identity at \( C_p \). For \( x \in L(C_p) \), \( u \in M(C_p) \), define
\[
v := i_{C_p}(xu) - [s_{C_p}(x), i_{C_p}(u)]_{B(C_p)}.
\]
Applying \( j_{C_p} \), we compute:
\begin{align*}
j_{C_p}(v) &= j_{C_p}(i_{C_p}(xu)) - j_{C_p}([s_{C_p}(x), i_{C_p}(u)]) \\
&= 0 - [j_{C_p}(s_{C_p}(x)), j_{C_p}(i_{C_p}(u))] \\
&= 0 - [x, 0] = 0.
\end{align*}
Thus, \( v \in \ker j_{C_p} = \mathrm{im}\, i_{C_p} \), so \( v = i_{C_p}(w) \) for some \( w \in M(C_p) \). Applying \( j_{C_p} \) again gives \( j_{C_p}(v) = j_{C_p}(i_{C_p}(w)) = 0 \), so \( i_{C_p}(w) = 0 \), and hence \( w = 0 \) (since \( i_{C_p} \) is injective). Therefore, \( v = 0 \), and
\[
i_{C_p}(xu) = [s_{C_p}(x), i_{C_p}(u)]_{B(C_p)}.
\]

A similar argument holds at \( e \),hence,
\[
i_e(am) = [s_e(a), i_e(m)]_{B(e)}.
\]
\end{proof}
\end{prop}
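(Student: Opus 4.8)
The plan is to establish the two displayed identities separately at the subgroups $e$ and $C_p$, observing first that the two arguments are identical after replacing the triple $(s_{C_p},i_{C_p},j_{C_p})$ by $(s_e,i_e,j_e)$; hence I will carry out only the case of $C_p$ in detail. The conceptual content is that, by definition of a singular (abelian) extension of Lie algebras, $M(C_p)$ is realized through $i_{C_p}$ as an abelian ideal of $B(C_p)$ and the prescribed $L(C_p)$-module structure on $M(C_p)$ is precisely the one induced on that ideal by the bracket of $B(C_p)$; the statement to be proved is then that the Lie algebra section $s_{C_p}$ may be used as the chosen lift when computing this induced action.

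First I would record the defining property of the induced module structure at $C_p$: identifying $M(C_p)$ with the abelian ideal $\mathrm{im}\, i_{C_p}=\ker j_{C_p}$ of $B(C_p)$, the $L(C_p)$-action satisfies $i_{C_p}(x\cdot u)=[b,i_{C_p}(u)]_{B(C_p)}$ for every $b\in B(C_p)$ with $j_{C_p}(b)=x$. This is independent of the chosen lift $b$, since two lifts differ by an element of $\mathrm{im}\, i_{C_p}$ and $[\mathrm{im}\, i_{C_p},\mathrm{im}\, i_{C_p}]_{B(C_p)}=0$. Because the extension is $M$-split, $j_{C_p}\circ s_{C_p}=\mathrm{id}_{L(C_p)}$, so $s_{C_p}(x)$ is a legitimate lift of $x$; applying the identity above with $b=s_{C_p}(x)$ gives $i_{C_p}(xu)=[s_{C_p}(x),i_{C_p}(u)]_{B(C_p)}$, which is the first assertion.

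To keep the argument self-contained I would also spell out the kernel computation hinted at in the statement: set $v:=i_{C_p}(xu)-[s_{C_p}(x),i_{C_p}(u)]_{B(C_p)}$; since $j_{C_p}$ is a Lie algebra homomorphism with $j_{C_p}\circ i_{C_p}=0$ and $j_{C_p}\circ s_{C_p}=\mathrm{id}$, one computes $j_{C_p}(v)=0-[x,0]=0$, so $v\in\ker j_{C_p}=\mathrm{im}\, i_{C_p}$ and $v=i_{C_p}(w)$ for a unique $w\in M(C_p)$ by injectivity of $i_{C_p}$; comparison with the induced module structure forces $w=(x\cdot u)-(x\cdot u)=0$, hence $v=0$. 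I would then repeat this computation verbatim at $e$, with $(s_e,i_e,j_e)$ in place of $(s_{C_p},i_{C_p},j_{C_p})$, to obtain $i_e(am)=[s_e(a),i_e(m)]_{B(e)}$.

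The main (and essentially only) obstacle is conceptual rather than computational: one must recognize that the hypothesis ``singular extension of Lie algebras'' already encodes the agreement of the a priori $L$-bimodule structure on $M$ with the structure induced by $B$, which is exactly what upgrades ``$v\in\mathrm{im}\, i_{C_p}$'' to ``$v=0$''. Beyond this, the proof uses nothing more than $j\circ i=0$, $j\circ s=\mathrm{id}$, and the multiplicativity of $j$ on Lie brackets; in particular the compatibility of $i$, $j$, $s$ with the restriction, transfer, and conjugation maps is not needed for this statement, entering only the surrounding classification of extensions.
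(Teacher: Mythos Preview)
Your proposal is correct and follows essentially the same route as the paper: form the difference $v$, observe $j_{C_p}(v)=0$ so that $v=i_{C_p}(w)$, and then argue $w=0$. Your explicit invocation of the induced-module description of a singular extension to force $w=(x\cdot u)-(x\cdot u)=0$ is in fact the correct way to close the argument; the paper's own final step (``applying $j_{C_p}$ again gives $j_{C_p}(i_{C_p}(w))=0$, so $i_{C_p}(w)=0$'') is circular as written, so your version is the cleaner one.
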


\begin{prop}\label{Prob1}
Let \( (f_{C_p}, f_e) \) be a \( C_p \)-Chevalley–Eilenberg 2-cocycle associated with a \( C_p \)-Green functor of Lie type. Then one can construct a new \( C_p \)-Green functor of Lie type, denoted \( B_{f_{C_p}, f_e} \), defined as follows.

The value at \( C_p \) is the Lie algebra \( B_{f_{C_p}}(C_p) = M(C_p) \oplus L(C_p) \), equipped with the bracket
\[
\langle (u_1, x_1), (u_2, x_2) \rangle = \left( x_1 u_2 - x_2 u_1 + f_{C_p}(x_1, x_2), [x_1, x_2] \right).
\]
Similarly, at the trivial subgroup \( e \), we define \( B_{f_e}(e) = M(e) \oplus L(e) \) with the bracket
\[
\langle (m_3, a_3), (m_4, a_4) \rangle = \left( a_3 m_4 - a_4 m_3 + f_e(a_3, a_4), [a_3, a_4] \right).
\]
The structure maps are defined componentwise:
\[
c_B(m, a) = (c_M(m), c_L(a)), \quad
t_B(m, a) = (t_M(m), t_L(a)), \quad
r_B(u, x) = (r_M(u), r_L(x)).
\]

The relationships among these values and maps are summarized in the diagram:
$$
\begin{tikzpicture}[x=3cm,y=2cm]
\node (a) at (0,0) {$B_{f_{e}}(e)$};
\node (v) at (0,2) {$B_{f_{C_{p}}}(C_{p})$};
\draw[->] (a) to[bend left, "{$t_{B}$}"]   (v);
\draw[->] (v) to[bend left,"{$r_{B}$}"] (a);
\draw[->] (a) to[loop right,"{$c_{B}$}"] (a);
\end{tikzpicture}
$$
\end{prop}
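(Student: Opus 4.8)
The plan is to verify that the proposed data genuinely defines an object of $\mathrm{Green}_{\mathrm{Lie}}(C_p)$, i.e. to check the three axioms in the definition of a $C_p$-Green functor of Lie type, plus the underlying Mackey-functor axioms for the structure maps. I would proceed in four stages.

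\emph{Stage 1: Lie algebra structure on each level.} First I would check that the bracket $\langle(u_1,x_1),(u_2,x_2)\rangle=(x_1u_2-x_2u_1+f_{C_p}(x_1,x_2),[x_1,x_2])$ on $M(C_p)\oplus L(C_p)$ is a Lie bracket, and likewise for the level $e$. Antisymmetry is immediate from antisymmetry of $f_{C_p}$ (it is an alternating bilinear map) and of the bracket $[-,-]$, together with $x_1u_2-x_2u_1$ being antisymmetric. For the Jacobi identity I would expand $\langle\langle(u_1,x_1),(u_2,x_2)\rangle,(u_3,x_3)\rangle$ and cyclic permutations; the $L(C_p)$-component closes by Jacobi in $L(C_p)$, and the $M(C_p)$-component splits into three pieces: the terms only involving the module action close because $M(C_p)$ is a left $L(C_p)$-module (this is the standard computation for the semidirect product $M\rtimes L$), the terms $f_{C_p}([x_1,x_2],x_3)+\text{cyc}$ together with $x_1f_{C_p}(x_2,x_3)+\text{cyc}$ vanish precisely by the $2$-cocycle condition (item 1 in Remark \ref{Rem1}), and cross terms cancel in pairs. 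This is the classical abelian-extension computation, so I would only indicate it. The same argument applies verbatim at level $e$ using item 2 of Remark \ref{Rem1}.

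\emph{Stage 2: Mackey functor axioms.} Since $M$ and $L$ are already $C_p$-Mackey functors and all of $c_B,t_B,r_B$ are defined componentwise, the identities $r_B r_B=\mathrm{id}$-type relations, $c_B^p=\mathrm{id}$, and the Mackey double-coset formula $r^{C_p}_e t^{C_p}_e=\sum_{h}\cdots$ follow immediately from the corresponding relations for $M$ and $L$ componentwise. I would state this and move on.

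\emph{Stage 3: Compatibility of structure maps with the brackets (axioms 2 and 3).} Here is where the cocycle compatibility conditions 3--6 of Remark \ref{Rem1} enter. For axiom 2 I must show $r_B$ is a Lie homomorphism: $r_B\langle(u_1,x_1),(u_2,x_2)\rangle=(r_M(x_1u_2-x_2u_1+f_{C_p}(x_1,x_2)),r_L[x_1,x_2])$, and this equals $\langle r_B(u_1,x_1),r_B(u_2,x_2)\rangle=(r_L(x_1)r_M(u_2)-r_L(x_2)r_M(u_1)+f_e(r_L(x_1),r_L(x_2)),[r_L(x_1),r_L(x_2)])$; the $M$-component matches because $r_M$ is a module map over $r_L$ (module compatibility axiom) and because of condition 4, $r_M f_{C_p}=f_e(r_L-,r_L-)$, while the $L$-component matches because $r_L$ is a Lie homomorphism. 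The analogous check for $c_B$ uses condition 3 and the conjugation-compatibility of the module action. For axiom 3 (Frobenius), I expand $\langle t_B(m,a),(u,x)\rangle$ and compare with $t_B\langle(m,a),r_B(u,x)\rangle$: the $M$-component requires $t_L(a)u-x\,t_M(m)+f_{C_p}(t_L(a),x)=t_M(a\,r_M(u)-r_L(x)m+f_e(a,r_L(x)))$, which follows term-by-term from the module Frobenius relations and conditions 5--6; the $L$-component is the Frobenius relation in $L$. The second Frobenius identity is symmetric.

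\emph{Main obstacle.} The routine but bookkeeping-heavy part is the Jacobi identity in Stage 1 — keeping track of signs and correctly isolating the cocycle contribution — but that is classical. The genuinely new content, and where I expect care is needed, is Stage 3: one must check that the \emph{six} compatibility conditions defining a $C_p$-cocycle (Remark \ref{Rem1}) are exactly what is needed, and no more, to make $r_B$, $c_B$ Lie homomorphisms and $t_B$ satisfy Frobenius — in other words, that the cocycle axioms were set up correctly. I would therefore present Stage 3 in full detail while compressing Stages 1 and 2.
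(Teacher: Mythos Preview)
Your proposal is correct and follows essentially the same approach as the paper: the paper likewise notes briefly that the Lie algebra structures at each level follow from the classical cocycle computation (your Stage~1), omits the componentwise Mackey axioms (your Stage~2), and spends its effort on exactly your Stage~3, verifying in full that $c_B$ and $r_B$ are Lie homomorphisms via conditions (3) and (4) of Remark~\ref{Rem1} and that $t_B$ satisfies Frobenius via conditions (5) and (6). Your allocation of detail matches the paper's presentation.
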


\begin{proof}
We must verify that \( B_{f_{C_p}, f_e} \) satisfies the axioms of a \( C_p \)-Green functor of Lie type.

First, note that both \( B_{f_{C_p}}(C_p) \) and \( B_{f_e}(e) \) are Lie algebras by construction, since \( f_{C_p} \) and \( f_e \) are 2-cocycles. We begin by checking that the conjugation map \( c_B \) is a Lie algebra homomorphism. Let \( (m, a), (n, b) \in B_{f_e}(e) \). Then:
\begin{align*}
c_B\big(\langle (m,a), (n,b) \rangle\big)
&= c_B\left(an - bm + f_e(a, b), [a, b]\right) \\
&= \left(c_M(an - bm + f_e(a, b)), c_L([a, b])\right) \\
&= \left(c_L(a)c_M(n) - c_L(b)c_M(m) + c_M(f_e(a, b)), [c_L(a), c_L(b)]\right).
\end{align*}
On the other hand, we compute:
\begin{align*}
\langle c_B(m, a), c_B(n, b) \rangle
&= \langle (c_M(m), c_L(a)), (c_M(n), c_L(b)) \rangle \\
&= \left(c_L(a)c_M(n) - c_L(b)c_M(m) + f_e(c_L(a), c_L(b)), [c_L(a), c_L(b)]\right).
\end{align*}
By condition (3) of remark \eqref{Rem1} (which describes the Chevalley–Eilenberg cocycle compatibility), it follows that these expressions are equal, so \( c_B \) is a Lie algebra homomorphism.

Next, we verify that the restriction map \( r_B \) is also a Lie algebra homomorphism. Let \( (u, x), (v, y) \in B_{f_{C_p}}(C_p) \). Then:
\begin{align*}
r_B\big(\langle (u,x), (v,y) \rangle\big)
&= r_B\left(xv - yu + f_{C_p}(x, y), [x, y]\right) \\
&= \left(r_L(x) r_M(v) - r_L(y) r_M(u) + r_M(f_{C_p}(x, y)), r_L([x, y])\right) \\
&= \left(r_L(x) r_M(v) - r_L(y) r_M(u) + f_e(r_L(x), r_L(y)), [r_L(x), r_L(y)]\right),
\end{align*}
where the final equality uses condition (4) of remark \eqref{Rem1}, which ensures compatibility of cocycles under restriction. Meanwhile:
\begin{align*}
\langle r_B(u, x), r_B(v, y) \rangle
&= \langle (r_M(u), r_L(x)), (r_M(v), r_L(y)) \rangle \\
&= \left(r_L(x) r_M(v) - r_L(y) r_M(u) + f_e(r_L(x), r_L(y)), [r_L(x), r_L(y)]\right).
\end{align*}
Thus, \( r_B \) is also a Lie homomorphism.

To verify the Frobenius relations, let \( (m, a) \in B_{f_e}(e) \) and \( (u, x) \in B_{f_{C_p}}(C_p) \). We compute:
\begin{align*}
\langle t_B(m, a), (u, x) \rangle
&= \langle (t_M(m), t_L(a)), (u, x) \rangle \\
&= \left(t_L(a)u - x t_M(m) + f_{C_p}(t_L(a), x), [t_L(a), x]\right).
\end{align*}
On the other hand:
\begin{align*}
t_B\big(\langle (m, a), r_B(u, x) \rangle\big)
&= t_B\left(a r_M(u) - r_L(x) m + f_e(a, r_L(x)), [a, r_L(x)]\right) \\
&= \left(t_M(a r_M(u) - r_L(x) m + f_e(a, r_L(x))), t_L([a, r_L(x)])\right) \\
&= \left(t_L(a) u - x t_M(m) + f_{C_p}(t_L(a), x), [t_L(a), x]\right),
\end{align*}
where we have again used the compatibility conditions from the definition of module over a \( C_p \)-Green functor of Lie type and condition (5) of remark \eqref{Rem1}.

Finally, it follows analogously that
\[
\langle (u,x), t_B(m,a) \rangle = t_B\big(\langle r_B(u,x), (m,a) \rangle\big),
\]
confirming the adjoint Frobenius identity, as required by condition (6) of remark \eqref{eq5}.

Therefore, all axioms are satisfied, and \( B_{f_{C_p}, f_e} \) is indeed a \( C_p \)-Green functor of Lie type.
\end{proof}

\begin{thm}
Let $L \in \mathrm{Green}_{\mathrm{Lie}}(C_p)$ and let $M$ be an $L$-bimodule. Then there is a one-to-one correspondence between the elements of $\mathrm{Ext}(L, M)$ and the elements of the second cohomology group $H^2_{\mathrm{Green}_{\mathrm{Lie}}(C_p)}(L, M)$.
\end{thm}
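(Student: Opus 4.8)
The plan is to set up mutually inverse maps
\[
\Phi : \mathrm{Ext}(L,M) \longrightarrow H^2_{\mathrm{Green}_{\mathrm{Lie}}(C_p)}(L,M), \qquad \Psi : H^2_{\mathrm{Green}_{\mathrm{Lie}}(C_p)}(L,M) \longrightarrow \mathrm{Ext}(L,M),
\]
following the classical $2$-cocycle/extension dictionary for Lie algebras but keeping track of the restriction, transfer and conjugation data at every stage. Throughout we identify $H^2_{\mathrm{Green}_{\mathrm{Lie}}(C_p)}(L,M)$ with $H^2_{C_p}(L,M) = Z^2_{C_p}(L,M)/B^2_{C_p}(L,M)$, whose cocycles are the pairs $(f_{C_p},f_e)$ of Remark~\ref{Rem1} and whose coboundaries are the $b_1'(\lambda)$ with $\lambda \in C^1_{C_p}(L,M) = \mathrm{Hom}_{\mathrm{Mack}(C_p)}(L,M)$.

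\textbf{Construction of $\Phi$.} Given an $M$-split extension $E : 0 \to M \xrightarrow{i} B \xrightarrow{j} L \to 0$ with splitting $s=(s_{C_p},s_e)$, define $f_{C_p} : \wedge^2 L(C_p)\to M(C_p)$ and $f_e : \wedge^2 L(e) \to M(e)$ by
\[
i_{C_p}\bigl(f_{C_p}(x_1,x_2)\bigr) = [s_{C_p}(x_1),s_{C_p}(x_2)]_{B(C_p)} - s_{C_p}\bigl([x_1,x_2]_L\bigr),
\]
and analogously for $f_e$; this is well defined because $j$ annihilates the right-hand side, so it lies in $\mathrm{im}\,i$, and $i$ is injective. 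I would then verify that $(f_{C_p},f_e)\in Z^2_{C_p}(L,M)$ by checking the six conditions of Remark~\ref{Rem1}: the Chevalley–Eilenberg identities (1)–(2) come from the Jacobi identity in $B(C_p)$ and $B(e)$ together with the formula $i(xu)=[s(x),i(u)]$ of Proposition~\ref{Prob2} (which encodes the $L$-module structure on $M$); the conjugation compatibility (3) follows from $c_B$ being a Lie homomorphism, $i$ a Mackey morphism, and $c_B s_e = s_e c_L$; the restriction compatibility (4) is the same argument with $r_B s_{C_p}=s_e r_L$; and the Frobenius compatibilities (5)–(6) come from combining the Frobenius identities in $B$ and in $L$ with $t_B s_e = s_{C_p} t_L$ and $r_B s_{C_p}=s_e r_L$. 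Independence of the choice of $s$: a second compatible splitting differs from $s$ by $i\circ\lambda$ for a unique $\lambda=(\lambda_{C_p},\lambda_e)$, which is forced to lie in $C^1_{C_p}(L,M)$ (it commutes with $r,t,c$ because $s,s'$ do and $i$ is a Mackey morphism), and a short computation using Proposition~\ref{Prob2} shows the two cocycles differ by $b_1'(\lambda)$. Invariance under equivalence: if $\beta : B_f\to B_g$ is an equivalence of extensions, then $\beta\circ s_f$ is a compatible splitting of $E_g$, and $\beta i_f = i_g$ forces the associated cocycle of $E_g$ to equal that of $E_f$. Hence $\Phi$ is well defined.

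\textbf{Construction of $\Psi$ and $\Phi\circ\Psi=\mathrm{id}$.} Given $(f_{C_p},f_e)\in Z^2_{C_p}(L,M)$, Proposition~\ref{Prob1} produces $B_{f_{C_p},f_e}\in\mathrm{Green}_{\mathrm{Lie}}(C_p)$; the componentwise inclusions $u\mapsto(u,0)$ and projections $(u,x)\mapsto x$ then assemble into an exact sequence of Mackey functors $E_f : 0\to M\to B_f\to L\to 0$ with $j$ a morphism of $C_p$-Green functors of Lie type, which is $M$-split via $s(x)=(0,x)$ (compatible with $r,t,c$ because the structure maps of $B_f$ are componentwise). Computing the cocycle of $E_f$ with this splitting returns $(f_{C_p},f_e)$ exactly, so $\Phi(\Psi[(f_{C_p},f_e)])=[(f_{C_p},f_e)]$. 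To see that $\Psi$ is well defined I would check that cohomologous cocycles give equivalent extensions: if $(f'_{C_p},f'_e)=(f_{C_p},f_e)+b_1'(\lambda)$ with $\lambda\in C^1_{C_p}(L,M)$, the maps $(m,a)\mapsto(m+\lambda(a),a)$ define an equivalence $B_f\to B_{f'}$ — a Lie isomorphism at both levels by the coboundary formula, and compatible with $r,t$ precisely because $\lambda$ is a Mackey morphism.

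\textbf{$\Psi\circ\Phi=\mathrm{id}$ and the main obstacle.} Finally, starting from an arbitrary $M$-split extension $E$ with splitting $s$ and associated cocycle $(f_{C_p},f_e)$, the maps $(u,x)\mapsto i(u)+s(x)$ assemble into an isomorphism $B_f\xrightarrow{\sim}B$ of extensions: the Lie-homomorphism property is exactly the computation unwinding the bracket on $B_f$ via Proposition~\ref{Prob2} and the definition of $(f_{C_p},f_e)$ (here one uses that $[i(M),i(M)]=0$ in $B$, part of singularity), and compatibility with $r,t,c$ follows from that of $i$ and $s$. This gives $\Psi\circ\Phi=\mathrm{id}$, so $\Phi$ and $\Psi$ are mutually inverse bijections. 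The conceptual backbone is classical; I expect the real work — and the likeliest source of friction — to be the Frobenius cocycle identities (5)–(6) of Remark~\ref{Rem1} coming out of an extension, since these force a simultaneous use of the Frobenius relations in $B$, the Frobenius relations in $L$, and the intertwining of $s$ with $t$ and $r$; the verifications that the comparison maps $\beta$ and $(u,x)\mapsto i(u)+s(x)$ commute with $t$ and $r$ are of the same character and should be handled in parallel.
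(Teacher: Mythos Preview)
Your proposal is correct and follows essentially the same route as the paper: both construct $\Phi$ from a splitting via $i(f(x,y))=[s(x),s(y)]-s([x,y])$, verify the six conditions of Remark~\ref{Rem1} using the Jacobi identity in $B$ together with the Lie/Frobenius structure of $c_B,r_B,t_B$ and Proposition~\ref{Prob2}, build $\Psi$ via Proposition~\ref{Prob1} with the obvious split, and exhibit the mutual inverse through $(u,x)\mapsto i(u)+s(x)$. Your explicit note on invariance of $\Phi$ under equivalence of extensions is a small addition the paper leaves implicit, but otherwise the arguments coincide.
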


\begin{proof}
To prove the theorem, we proceed as follows:

\begin{itemize}
    \item[(a)] First, we construct a well-defined map
    \[
    \mathrm{Ext}(L, M) \longrightarrow H^{2}_{\mathrm{Green}_{\mathrm{Lie}}(C_p)}(L, M).
    \]
    
    \item[(b)] Next, we construct a well-defined map in the reverse direction,
    \[
    H^{2}_{\mathrm{Green}_{\mathrm{Lie}}(C_p)}(L, M) \longrightarrow \mathrm{Ext}(L, M).
    \]
    
    \item[(c)] Finally, we show that these two maps are inverses of each other.
\end{itemize}
We now proceed to prove the theorem step by step:
\begin{enumerate}
\item[(a)]
\end{enumerate}

Consider a singular extension of Green functors of Lie type:
\[
\begin{tikzcd}
E: 0 \arrow{r} & M \arrow{r}{i} & B \arrow{r}{j} & L \arrow{r} & 0
\end{tikzcd}
\]
and let
\[
\begin{tikzcd}
s_{C_p}: L(C_p) \arrow{r} & B(C_p)
\end{tikzcd}
\quad \text{and} \quad
\begin{tikzcd}
s_e: L(e) \arrow{r} & B(e)
\end{tikzcd}
\]
be module homomorphisms such that
\[
j_{C_p} \circ s_{C_p} = \mathrm{id}_{L(C_p)} \quad \text{and} \quad j_e \circ s_e = \mathrm{id}_{L(e)}.
\]

Then for all \( x, y \in L(C_p) \) and \( a, b \in L(e) \), there exist unique alternating bilinear functions \( f_{C_p}(x, y) \in M(C_p) \) and \( f_e(a, b) \in M(e) \) such that
\begin{equation} \label{eq3}
[s_{C_p}(x), s_{C_p}(y)]_B = s_{C_p}[x, y]_L + i_{C_p} f_{C_p}(x, y),
\end{equation}
and
\begin{equation} \label{eq4}
[s_e(a), s_e(b)]_B = s_e[a, b]_L + i_e f_e(a, b).
\end{equation}

For all \( x, y, z \in L(C_p) \), the Jacobiator in \( B \) expands as follows:
\begin{equation} \label{eq5}
\begin{aligned}
[[s_{C_p}(x), s_{C_p}(y)]_B, s_{C_p}(z)]_B 
&= [s_{C_p}[x, y]_L + i_{C_p} f_{C_p}(x, y), s_{C_p}(z)]_B \\
&= [s_{C_p}[x, y]_L, s_{C_p}(z)]_B + [i_{C_p} f_{C_p}(x, y), s_{C_p}(z)]_B \\
&= s_{C_p}[[x, y], z]_L + i_{C_p} f_{C_p}([x, y]_L, z) - [s_{C_p}(z), i_{C_p} f_{C_p}(x, y)]_B.
\end{aligned}
\end{equation}

\begin{equation} \label{eq6}
\begin{aligned}
[[s_{C_p}(y), s_{C_p}(z)]_B, s_{C_p}(x)]_B 
&= [s_{C_p}[y, z]_L + i_{C_p} f_{C_p}(y, z), s_{C_p}(x)]_B \\
&= [s_{C_p}[y, z]_L, s_{C_p}(x)]_B + [i_{C_p} f_{C_p}(y, z), s_{C_p}(x)]_B \\
&= s_{C_p}[[y, z], x]_L + i_{C_p} f_{C_p}([y, z]_L, x) - [s_{C_p}(x), i_{C_p} f_{C_p}(y, z)]_B.
\end{aligned}
\end{equation}

\begin{equation} \label{eq7}
\begin{aligned}
[[s_{C_p}(z), s_{C_p}(x)]_B, s_{C_p}(y)]_B 
&= [s_{C_p}[z, x]_L + i_{C_p} f_{C_p}(z, x), s_{C_p}(y)]_B \\
&= [s_{C_p}[z, x]_L, s_{C_p}(y)]_B + [i_{C_p} f_{C_p}(z, x), s_{C_p}(y)]_B \\
&= s_{C_p}[[z, x], y]_L + i_{C_p} f_{C_p}([z, x]_L, y) - [s_{C_p}(y), i_{C_p} f_{C_p}(z, x)]_B.
\end{aligned}
\end{equation}
Therefore, the multiplication in \( B(C_p) \) satisfies the Jacobi identity. This follows from equations~\eqref{eq5}, \eqref{eq6}, and \eqref{eq7}, which imply that:
\[
i_{C_p} f_{C_p}([x,y]_L, z) - [s_{C_p}(z), i_{C_p} f_{C_p}(x,y)]_B 
+ i_{C_p} f_{C_p}([y,z]_L, x) - [s_{C_p}(x), i_{C_p} f_{C_p}(y,z)]_B 
+ i_{C_p} f_{C_p}([z,x]_L, y) - [s_{C_p}(y), i_{C_p} f_{C_p}(z,x)]_B = 0.
\]

Now, by Proposition~(8.5) and the injectivity of the morphism \( i_{C_p} \), it follows that:
\[
x f_{C_p}(y,z) - y f_{C_p}(x,z) + z f_{C_p}(x,y) 
- f_{C_p}([x,y]_L, z) + f_{C_p}([x,z]_L, y) - f_{C_p}([y,z]_L, x) = 0.
\]

Similarly, for \( a, b, c \in L(e) \), we compute the Jacobiator in \( B(e) \):

\begin{equation} \label{eq8}
\begin{aligned}
[[s_e(a), s_e(b)]_B, s_e(c)]_B 
&= [s_e[a,b]_L + i_e f_e(a,b), s_e(c)]_B \\
&= [s_e[a,b]_L, s_e(c)]_B + [i_e f_e(a,b), s_e(c)]_B \\
&= s_e[[a,b],c]_L + i_e f_e([a,b]_L, c) - [s_e(c), i_e f_e(a,b)]_B.
\end{aligned}
\end{equation}

\begin{equation} \label{eq9}
\begin{aligned}
[[s_e(b), s_e(c)]_B, s_e(a)]_B 
&= [s_e[b,c]_L + i_e f_e(b,c), s_e(a)]_B \\
&= [s_e[b,c]_L, s_e(a)]_B + [i_e f_e(b,c), s_e(a)]_B \\
&= s_e[[b,c],a]_L + i_e f_e([b,c]_L, a) - [s_e(a), i_e f_e(b,c)]_B.
\end{aligned}
\end{equation}

\begin{equation} \label{eq10}
\begin{aligned}
[[s_e(c), s_e(a)]_B, s_e(b)]_B 
&= [s_e[c,a]_L + i_e f_e(c,a), s_e(b)]_B \\
&= [s_e[c,a]_L, s_e(b)]_B + [i_e f_e(c,a), s_e(b)]_B \\
&= s_e[[c,a],b]_L + i_e f_e([c,a]_L, b) - [s_e(b), i_e f_e(c,a)]_B.
\end{aligned}
\end{equation}

Therefore, the multiplication in \( B(e) \) also satisfies the Jacobi identity, as it follows from equations~\eqref{eq8}, \eqref{eq9}, and \eqref{eq10} that:
\[
i_e f_e([a,b]_L, c) - [s_e(c), i_e f_e(a,b)]_B 
+ i_e f_e([b,c]_L, a) - [s_e(a), i_e f_e(b,c)]_B 
+ i_e f_e([c,a]_L, b) - [s_e(b), i_e f_e(c,a)]_B = 0.
\]

Again, applying Proposition~(8.5) and the injectivity of the morphism \( i_e \), we conclude that:
\[
a f_e(b,c) - b f_e(a,c) + c f_e(a,b) 
- f_e([a,b]_L, c) + f_e([a,c]_L, b) - f_e([b,c]_L, a) = 0.
\]
We now show that the pair of bilinear alternating maps \((f_{C_p}, f_e)\) defines a Chevalley–Eilenberg 2-cocycle of $C_{p}$-Green functor of Lie type.

Recall that the conjugation map \(c_B : B(e) \to B(e)\) is a Lie homomorphism. Applying \(c_B\) to equation~\eqref{eq4} yields:
\begin{align*}
c_B[s_e(a), s_e(b)]_B &= c_B(s_e[a,b]_L + i_e f_e(a,b)) \\
[\ c_B(s_e(a)),\ c_B(s_e(b))]_B &= c_B(s_e[a,b]_L) + c_B(i_e f_e(a,b)) \\
[\ s_e(c_L(a)),\ s_e(c_L(b))]_B &= s_e(c_L[a,b]_L) + i_e c_M(f_e(a,b)) \\
s_e[c_L(a), c_L(b)]_L + i_e f_e(c_L(a), c_L(b)) &= s_e[c_L(a), c_L(b)]_L + i_e c_M(f_e(a,b)) \\
f_e(c_L(a), c_L(b)) &= c_M(f_e(a,b)).
\end{align*}

Similarly, the restriction map \(r_B : B(C_p) \to B(e)\) is a Lie homomorphism. Applying \(r_B\) to equation~\eqref{eq3} gives:
\begin{align*}
r_B[s_{C_p}(x), s_{C_p}(y)]_B &= r_B(s_{C_p}[x,y]_L + i_{C_p} f_{C_p}(x,y)) \\
[\ r_B(s_{C_p}(x)),\ r_B(s_{C_p}(y))]_B &= r_B(s_{C_p}[x,y]_L) + r_B(i_{C_p} f_{C_p}(x,y)) \\
[\ s_e(r_L(x)),\ s_e(r_L(y))]_B &= s_e(r_L[x,y]_L) + i_e r_M(f_{C_p}(x,y)) \\
s_e[r_L(x), r_L(y)]_L + i_e f_e(r_L(x), r_L(y)) &= s_e[r_L(x), r_L(y)]_L + i_e r_M(f_{C_p}(x,y)) \\
f_e(r_L(x), r_L(y)) &= r_M(f_{C_p}(x,y)).
\end{align*}

From equation~\eqref{eq3}, we also have:
\begin{align*}
[s_{C_p}(y), s_{C_p}(x)]_B &= s_{C_p}[y,x]_L + i_{C_p} f_{C_p}(y,x), \\
\text{so that} \quad i_{C_p} f_{C_p}(y,x) &= [s_{C_p}(y), s_{C_p}(x)]_B - s_{C_p}[y,x]_L.
\end{align*}
Substituting \(y = t_L(a)\) gives:
\begin{align*}
i_{C_p} f_{C_p}(t_L(a), x) &= [s_{C_p}(t_L(a)), s_{C_p}(x)]_B - s_{C_p}[t_L(a), x]_L \\
&= [t_B(s_e(a)), s_{C_p}(x)]_B - s_{C_p}(t_L[a, r_L(x)]_L) \qquad \text{(Frobenius relation)} \\
&= t_B([s_e(a), r_B(s_{C_p}(x))]_B) - t_B(s_e[a, r_L(x)]_L) \\
&= t_B([s_e(a), s_e(r_L(x))]_B) - t_B(s_e[a, r_L(x)]_L) \\
&= t_B(s_e[a, r_L(x)]_L + i_e f_e(a, r_L(x))) - t_B(s_e[a, r_L(x)]_L) \qquad \text{(by~\eqref{eq4})} \\
&= t_B(i_e f_e(a, r_L(x))) \\
&= i_{C_p} t_M(f_e(a, r_L(x))),
\end{align*}
and hence,
\[
f_{C_p}(t_L(a), x) = t_M(f_e(a, r_L(x))).
\]

A similar calculation applies when substituting \(y = t_L(a)\) on the right. From equation~\eqref{eq3}, we have
\begin{align*}
[s_{C_p}(x), s_{C_p}(y)]_B &= s_{C_p}[x,y]_L + i_{C_p} f_{C_p}(x,y), \\
\text{so that} \quad i_{C_p} f_{C_p}(x,y) &= [s_{C_p}(x), s_{C_p}(y)]_B - s_{C_p}[x,y]_L.
\end{align*}
Substituting \(y = t_L(a)\) yields:
\begin{align*}
i_{C_p} f_{C_p}(x, t_L(a)) &= [s_{C_p}(x), s_{C_p}(t_L(a))]_B - s_{C_p}[x, t_L(a)]_L \\
&= [s_{C_p}(x), t_B(s_e(a))]_B - s_{C_p}(t_L[r_L(x), a]_L) \qquad \text{(Frobenius relation)} \\
&= t_B([r_B(s_{C_p}(x)), s_e(a)]_B) - t_B(s_e[r_L(x), a]_L) \\
&= t_B([s_e(r_L(x)), s_e(a)]_B) - t_B(s_e[r_L(x), a]_L) \\
&= t_B(s_e[r_L(x), a]_L + i_e f_e(r_L(x), a)) - t_B(s_e[r_L(x), a]_L) \qquad \text{(by~\eqref{eq4})} \\
&= t_B(i_e f_e(r_L(x), a)) \\
&= i_{C_p} t_M(f_e(r_L(x), a)),
\end{align*}
and thus,
$$
f_{C_p}(x, t_L(a)) = t_M(f_e(r_L(x), a)).
$$
Hence, $f_{C_{p}}$ and $f_{e}$ satisfy all the conditions in the definition of a Chevalley–Eilenberg 2-cocycle for $C_{p}$-Green functors of Lie type. Let
\[
s_{C_{p}}': L(C_{p}) \longrightarrow B(C_{p})
\quad \text{and} \quad
s_{e}': L(e) \longrightarrow B(e)
\]
be module homomorphisms, and let
\[
g_{C_{p}}: L(C_{p}) \times L(C_{p}) \longrightarrow M(C_{p})
\quad \text{and} \quad
g_{e}: L(e) \times L(e) \longrightarrow M(e)
\]
be the Chevalley–Eilenberg 2-cocycles of $C_{p}$-Green functors of Lie type corresponding to the choices of $s_{C_{p}}'$ and $s_{e}'$. Then,
\[
j_{C_{p}} \circ s_{C_{p}}(x) = x = j_{C_{p}} \circ s_{C_{p}}'(x), \quad
j_{e} \circ s_{e}(a_{1}) = a_{1} = j_{e} \circ s_{e}'(a_{1})
\]
for every $x \in L(C_{p})$ and $a_{1} \in L(e)$. Therefore, there exist maps $h_{C_{p}}: L(C_{p}) \to M(C_{p})$ and $h_{e}: L(e) \to M(e)$ such that
\begin{equation}\label{eq11}
s_{C_{p}}'(x) = i_{C_{p}} h_{C_{p}}(x) + s_{C_{p}}(x),
\end{equation}
\begin{equation}\label{eq12}
s_{e}'(a) = i_{e} h_{e}(a) + s_{e}(a).
\end{equation}

Now, for $x, y \in L(C_{p})$, by substituting \eqref{eq11} into the identity from \eqref{eq3} and applying Proposition \eqref{Prob2}, we obtain
\begin{align*}
i_{C_{p}} f_{C_{p}}(x, y) + s_{C_{p}}'[x, y]_{L} - i_{C_{p}} h_{C_{p}}[x, y]_{L}
&= [(s_{C_{p}}'(x) - i_{C_{p}} h_{C_{p}}(x)), (s_{C_{p}}'(y) - i_{C_{p}} h_{C_{p}}(y))]_{B} \\
&= [s_{C_{p}}'(x), s_{C_{p}}'(y)]_{B}
- [s_{C_{p}}'(x), i_{C_{p}} h_{C_{p}}(y)]_{B}
- [i_{C_{p}} h_{C_{p}}(x), s_{C_{p}}'(y)]_{B} \\
&\quad + [i_{C_{p}} h_{C_{p}}(x), i_{C_{p}} h_{C_{p}}(y)]_{B}.
\end{align*}

Since the last bracket vanishes, the expression simplifies to
\begin{align*}
i_{C_{p}} f_{C_{p}}(x, y) + s_{C_{p}}'[x, y]_{L} - i_{C_{p}} h_{C_{p}}[x, y]_{L}
&= s_{C_{p}}'[x, y]_{L} + i_{C_{p}} g_{C_{p}}(x, y)
- [s_{C_{p}}'(x), i_{C_{p}} h_{C_{p}}(y)]_{B}
+ [i_{C_{p}} h_{C_{p}}(x), s_{C_{p}}'(y)]_{B}.
\end{align*}

Hence,
\[
g_{C_{p}}(x, y) - f_{C_{p}}(x, y) = x \cdot h_{C_{p}}(y) - h_{C_{p}}([x, y]_{L}) - y \cdot h_{C_{p}}(x),
\]
which implies
\[
\delta h_{C_{p}}(x, y) = g_{C_{p}}(x, y) - f_{C_{p}}(x, y).
\]

Therefore, $f_{C_{p}}$ and $g_{C_{p}}$ differ by a Chevalley–Eilenberg 2-coboundary in the category of $C_{p}$-Green functors of Lie type.

Similarly, for $a_{1}, b_{1} \in L(e)$, substituting \eqref{eq12} into the identity from \eqref{eq4} and applying Proposition \eqref{Prob2}, we get
\begin{align*}
i_{e} f_{e}(a_{1}, b_{1}) + s_{e}'[a_{1}, b_{1}]_{L} - i_{e} h_{e}[a_{1}, b_{1}]_{L}
&= [(s_{e}'(a_{1}) - i_{e} h_{e}(a_{1})), (s_{e}'(b_{1}) - i_{e} h_{e}(b_{1}))]_{B} \\
&= [s_{e}'(a_{1}), s_{e}'(b_{1})]_{B}
- [s_{e}'(a_{1}), i_{e} h_{e}(b_{1})]_{B}
- [i_{e} h_{e}(a_{1}), s_{e}'(b_{1})]_{B}.
\end{align*}

Hence,
\[
g_{e}(a_{1}, b_{1}) - f_{e}(a_{1}, b_{1}) = a_{1} \cdot h_{e}(b_{1}) - h_{e}([a_{1}, b_{1}]_{L}) - b_{1} \cdot h_{e}(a_{1}),
\]
which gives
\[
\delta h_{e}(a_{1}, b_{1}) = g_{e}(a_{1}, b_{1}) - f_{e}(a_{1}, b_{1}).
\]

Therefore, $f_{e}$ and $g_{e}$ differ by a Chevalley–Eilenberg 2-coboundary for $C_{p}$-Green functors of Lie type. Thus, there exists a well-defined map from $\operatorname{Ext}(L, M)$ to $H^{2}_{\operatorname{Lie}_{(C_{p})}}(L, M)$.

\medskip

\noindent \textbf{(b)}
Let $[f_{C_{p}}]$ and $[f_{e}] \in H^{2}_{\operatorname{Lie}_{(C_{p})}}(L, M)$, where $f_{C_{p}}$ and $f_{e} \in Z^{2}_{\operatorname{Lie}_{(C_{p})}}(L, M)$. We define the $C_{p}$-Green functor of Lie type $B_{f_{C_{p}}, f_{e}}$ as in Proposition \eqref{Prob1}. Hence, the extension associated to $f_{C_{p}}$ and $f_{e}$ is
\[
\begin{tikzcd}
E_{f_{C_{p}}, f_{e}}: 0 \arrow{r} & M \arrow{r}{i} & B_{f_{C_{p}}, f_{e}} \arrow{r}{j} & L \arrow{r} & 0,
\end{tikzcd}
\]
where $j$ is a homomorphism of $C_{p}$-Green functors of Lie type and $i$ is a homomorphism of $C_{p}$-Mackey functors.

We now show that $[E_{f_{C_{p}}}]$ and $[E_{f_{e}}]$ are independent of the choice of $f_{C_{p}}$ and $f_{e}$. That is, if $[f_{C_{p}}] = [g_{C_{p}}]$, then $f_{C_{p}} = g_{C_{p}} + \delta h_{C_{p}}$, and similarly $[f_{e}] = [g_{e}]$ implies $f_{e} = g_{e} + \delta h_{e}$.

Two extensions $E_{f_{C_{p}}}$ and $E_{g_{C_{p}}}$ are equivalent if and only if there exists a commutative diagram
\[
\begin{tikzcd}[row sep=large, column sep=large]
0 \ar[r] & M(C_{p}) \ar[r, "i_{C_{p}}"] \ar[d, equal] & B_{f_{C_{p}}}(C_{p}) \ar[d, "\beta_{C_{p}}"] \ar[r, "j_{C_{p}}"] & L(C_{p}) \ar[r] \ar[d, equal] & 0 \\
0 \ar[r] & M(C_{p}) \ar[r, "i'_{C_{p}}"] & B_{g_{C_{p}}}(C_{p}) \ar[r, "j'_{C_{p}}"] & L(C_{p}) \ar[r] & 0
\end{tikzcd}
\]
with $\beta_{C_{p}}$ a Lie algebra homomorphism. Commutativity implies that
\[
\beta_{C_{p}}(u, x) = (u + h_{C_{p}}(x), x)
\]
for some $h_{C_{p}} \in C^{1}_{C_{p}}(L, M)$. The fact that $\beta_{C_{p}}$ is a Lie homomorphism gives the identity
\begin{equation}\label{eq13}
\beta_{C_{p}}([(u, x), (v, y)]_{B_{f_{C_{p}}}}) = (xv - yu + f_{C_{p}}(x, y) + h_{C_{p}}([x, y]_{L}), [x, y]_{L}),
\end{equation}
and
\begin{equation}\label{eq14}
[\beta_{C_{p}}(u, x), \beta_{C_{p}}(v, y)]_{B_{g_{C_{p}}}} = (xv + x h_{C_{p}}(y) - yu - y h_{C_{p}}(x) + g_{C_{p}}(x, y), [x, y]_{L}).
\end{equation}
Therefore, from equations~\eqref{eq13} and~\eqref{eq14}, we obtain
\[
\delta h_{C_{p}}(x, y) = f_{C_{p}}(x, y) - g_{C_{p}}(x, y) = x h_{C_{p}}(y) - h_{C_{p}}([x, y]_{L}) - y h_{C_{p}}(x).
\]
Hence, $f_{C_{p}} - g_{C_{p}}$ is a Chevalley–Eilenberg 2-coboundary of the $C_{P}$-Green functors of Lie type.

\bigskip

Conversely, suppose $f_{C_{p}} - g_{C_{p}}$ is a Chevalley–Eilenberg 2-coboundary of the $C_{P}$-Green functors of Lie type. Then we aim to prove that the extensions $E_{f_{C_{p}}}$ and $E_{g_{C_{p}}}$ are equivalent. Two extensions $E_{f_{e}}$ and $E_{g_{e}}$ are said to be equivalent if and only if there exists a commutative diagram:
\[
\begin{tikzcd}[row sep=large, column sep=large]
0 \ar[r] & M(e) \ar[r, "i_{e}"] \ar[d, equal] & B_{f_{e}}(e) \ar[d, "\beta_{e}"] \ar[r, "j_{e}"] & L(e) \ar[r] \ar[d, equal] & 0 \\
0 \ar[r] & M(e) \ar[r, "i'_{e}"] & B_{g_{e}}(e) \ar[r, "j'_{e}"] & L(e) \ar[r] & 0,
\end{tikzcd}
\]
where $\beta_{e}$ is a homomorphism of Lie algebras. The commutativity of the diagram implies that
\[
\beta_{e}(n, b) = (n + h_{e}(b), b)
\]
for some $h_{e} \in C^{1}_{e}(L, M)$. The fact that $\beta_{e}$ is a Lie algebra homomorphism yields the identity
\begin{equation} \label{eq15}
\beta_{e}([(n, b), (m, a)]_{B_{f_{e}}}) 
= \beta_{e}(b m - a n + f_{e}(b, a), [b, a]_{L}) 
= (b m - a n + f_{e}(b, a) + h_{e}([b, a]_{L}), [b, a]_{L}),
\end{equation}
and
\begin{equation} \label{eq16}
[\beta_{e}(n, b), \beta_{e}(m, a)]_{B_{g_{e}}} 
= [(n + h_{e}(b), b), (m + h_{e}(a), a)]_{B_{g_{e}}} 
= (b m + b h_{e}(a) - a n - a h_{e}(b) + g_{e}(b, a), [b, a]_{L}).
\end{equation}
Therefore, from equations~\eqref{eq15} and~\eqref{eq16}, we obtain
\[
\delta h_{e}(b, a) = f_{e}(b, a) - g_{e}(b, a) = b h_{e}(a) - h_{e}([b, a]_{L}) - a h_{e}(b).
\]
Hence, $f_{e} - g_{e}$ is a Chevalley–Eilenberg 2-coboundary of the $C_{P}$-Green functors of Lie type.

\medskip

Conversely, suppose $f_{e} - g_{e}$ is a Chevalley–Eilenberg 2-coboundary of the $C_{P}$-Green functors of Lie type. Then we aim to prove that the extensions $E_{f_{e}}$ and $E_{g_{e}}$ are equivalent. Moreover, we need to verify that the following diagram commutes:
\[
\begin{tikzcd}
0 & M(C_{p}) && B_{f_{C_{p}}}(C_{p}) && L(C_{p}) && 0 \\
0 && M(C_{p}) && B_{g_{C_{p}}}(C_{p}) && L(C_{p}) & 0 \\
0 & M(e) && B_{f_{e}}(e) && L(e) && 0 \\
0 && M(e) && B_{g_{e}}(e) && L(e) & 0 \\
\arrow[from=1-1, to=1-2]
\arrow["i_{C_{p}}", from=1-2, to=1-4]
\arrow["r_{M}"{description, pos=0.7}, curve={height=-18pt}, from=1-2, to=3-2]
\arrow["j_{C_{p}}", from=1-4, to=1-6]
\arrow["\beta_{C_{p}}"{pos=0.6}, from=1-4, to=2-5]
\arrow["r_{B}"{description, pos=0.7}, curve={height=-18pt}, from=1-4, to=3-4]
\arrow[from=1-6, to=1-8]
\arrow[equals, from=1-6, to=2-7]
\arrow["r_{L}"{description, pos=0.7}, curve={height=-18pt}, from=1-6, to=3-6]
\arrow[from=2-1, to=2-3]
\arrow[equals, from=2-3, to=1-2]
\arrow["i^{'}_{C_{p}}", from=2-3, to=2-5]
\arrow["r_{M}"{description, pos=0.7}, curve={height=-18pt}, from=2-3, to=4-3]
\arrow["j^{'}_{C_{p}}", from=2-5, to=2-7]
\arrow["r_{B}"{description, pos=0.7}, curve={height=-18pt}, from=2-5, to=4-5]
\arrow[from=2-7, to=2-8]
\arrow["r_{L}"{description, pos=0.7}, curve={height=-18pt}, from=2-7, to=4-7]
\arrow[from=3-1, to=3-2]
\arrow["t_{M}"{description, pos=0.7}, curve={height=-18pt}, from=3-2, to=1-2]
\arrow["i_{e}", from=3-2, to=3-4]
\arrow[equals, from=3-2, to=4-3]
\arrow["t_{B}"{description, pos=0.7}, curve={height=-18pt}, from=3-4, to=1-4]
\arrow["j_{e}", from=3-4, to=3-6]
\arrow["\beta_{e}"'{pos=0.4}, shorten >=6pt, from=3-4, to=4-5]
\arrow["t_{L}"{description, pos=0.7}, curve={height=-18pt}, from=3-6, to=1-6]
\arrow[from=3-6, to=3-8]
\arrow[equals, from=3-6, to=4-7]
\arrow[from=4-1, to=4-3]
\arrow["t_{M}"{description, pos=0.7}, curve={height=-18pt}, from=4-3, to=2-3]
\arrow["i^{'}_{e}", from=4-3, to=4-5]
\arrow["t_{B}"{description, pos=0.7}, curve={height=-18pt}, from=4-5, to=2-5]
\arrow["j^{'}_{e}", from=4-5, to=4-7]
\arrow["t_{L}"{description, pos=0.7}, curve={height=-18pt}, from=4-7, to=2-7]
\arrow[from=4-7, to=4-8]
\end{tikzcd}
\]

It suffices to check that \( r_{B} \circ \beta_{C_{p}}(u, x) = \beta_{e} \circ r_{B}(u, x) \). We compute:
\[
r_{B} \circ \beta_{C_{p}}(u, x) = r_{B}(u + h_{C_{p}}(x), x) = \big(r_{M}(u) + r_{M}(h_{C_{p}}(x)),\, r_{L}(x)\big),
\]
and
\[
\beta_{e} \circ r_{B}(u, x) = \beta_{e}(r_{M}(u), r_{L}(x)) = \big(r_{M}(u) + h_{e}(r_{L}(x)),\, r_{L}(x)\big).
\]

Thus, by Definition \eqref{Def1}, it follows that \( r_{B} \circ \beta_{C_{p}} = \beta_{e} \circ r_{B} \). Therefore, we conclude that there exists a well-defined map
\[
H^{2}_{\mathrm{Lie}_{(C_{p})}}(L, M) \longrightarrow \operatorname{Ext}(L, M).
\]

\medskip

\noindent \textbf{(c)}
 Let \( f_{C_p} \) and \( f_e \) be two Chevalley–Eilenberg 2-cocycles of \( C_p \)-Green functors of Lie type. We define the Lie bracket on \( B_{f_{C_p}}(C_p) \) and \( B_{f_e}(e) \) as follows:
\[
[(u,x),(v,y)]_B = \left(xv - yu + f_{C_p}(x,y), [x,y]_L\right),
\]
\[
[(m,a),(n,b)]_B = \left(an - bm + f_e(a,b), [a,b]_L\right),
\]
where \( u,v \in M(C_p) \), \( x,y \in L(C_p) \), \( m,n \in M(e) \), and \( a,b \in L(e) \).

The Chevalley–Eilenberg 2-cocycle of $C_{p}$-Green functor of Lie type conditions for \( f_{C_p} \) and \( f_e \) ensure that these brackets satisfy the Jacobi identity and antisymmetry. Hence, \( B_{f_{C_p}}(C_p) \) and \( B_{f_e}(e) \) are Lie algebras.

We now define the maps
\begin{align*}
    i_{C_p} & : M(C_p) \longrightarrow B_{f_{C_p}}(C_p), & i_{C_p}(u) &= (u,0), \\
    i_e     & : M(e) \longrightarrow B_{f_e}(e),         & i_e(m)     &= (m,0), \\
    j_{C_p} & : B_{f_{C_p}}(C_p) \longrightarrow L(C_p), & j_{C_p}(u,x) &= x, \\
    j_e     & : B_{f_e}(e) \longrightarrow L(e),         & j_e(m,a)     &= a.
\end{align*}

Here, \( i_{C_p} \) and \( i_e \) are homomorphisms of \( C_p \)-Mackey functors, and \( j_{C_p} \) and \( j_e \) are homomorphisms of \( C_p \)-Green functors of Lie type.

Thus, the sequence
$$
E_{f_{C_{p}},f_{e}} : 0 \to M \overset{i}{\to} B \overset{j}{\underset{s}{\rightleftarrows}} L \to 0
$$
is exact.

For any \( x \in L(C_p) \) and \( a \in L(e) \), define the sections \( s_{C_p}(x) = (0,x) \) and \( s_e(a) = (0,a) \). Then, for \( x,y \in L(C_p) \), we compute
\begin{align*}
[s_{C_p}(x), s_{C_p}(y)]_B 
&= [(0,x), (0,y)]_B \\
&= (f_{C_p}(x,y), [x,y]_L) \\
&= (f_{C_p}(x,y), 0) + (0, [x,y]_L) \\
&= i_{C_p}(f_{C_p}(x,y)) + s_{C_p}([x,y]_L).
\end{align*}

Similarly, for \( a,b \in L(e) \), we have
\begin{align*}
[s_e(a), s_e(b)]_B 
&= [(0,a), (0,b)]_B \\
&= (f_e(a,b), [a,b]_L) \\
&= (f_e(a,b), 0) + (0, [a,b]_L) \\
&= i_e(f_e(a,b)) + s_e([a,b]_L).
\end{align*}
Therefore, we conclude that the chosen sections \( s_{C_p} \) and \( s_e \) yield the Chevalley–Eilenberg 2-cocycles \( f_{C_p} \) and \( f_e \) for the \( C_p \)-Green functors of Lie type.

\medskip

Conversely, suppose the complex
\[
E : 0 \longrightarrow M \xrightarrow{i} B \xrightarrow{j} L \longrightarrow 0
\]
is an extension, and let \( f_{C_p} \) and \( f_e \) be the Chevalley–Eilenberg 2-cocycles of \( C_p \)-Green functors of Lie type obtained from this extension. We now show that the extension
\[
E_{f_{C_p}, f_e} : 0 \longrightarrow M \xrightarrow{i} B_{f_{C_p}, f_e} \xrightarrow{j} L \longrightarrow 0
\]
associated with \( f_{C_p} \) and \( f_e \) is equivalent to the given extension \( E \). 

The extensions \( E \) and \( E_{f_{C_p}, f_e} \) are equivalent if there exists a homomorphism 
\[
\tau_{f_{C_p}, f_e} : B_{f_{C_p}, f_e} \longrightarrow B
\]
such that the following diagram commutes:
\[
\begin{tikzcd}[row sep=large, column sep=large]
0 \ar[r] & M \ar[r, "i"] \ar[d, equal] & B \ar[r, shift left, "j"] & L \ar[r] \ar[l, shift left, "s"] \ar[d, equal] & 0 \\
0 \ar[r] & M \ar[r, "i"] & B_{f_{C_p}, f_e} \ar[u, "\tau_{f_{C_p}, f_e}"] \ar[r, "j"] & L \ar[r] & 0
\end{tikzcd}
\]

From the commutativity of the diagram, we have
\[
\tau_{f_{C_p}}(u, x) = i_{C_p}(u) + s_{C_p}(x), \quad \text{and} \quad \tau_{f_e}(m, a) = i_e(m) + s_e(a),
\]
where \( u \in M(C_p) \), \( x \in L(C_p) \), \( m \in M(e) \), and \( a \in L(e) \).

It remains to verify that the maps \( \tau_{f_{C_p}} \) and \( \tau_{f_e} \) are Lie algebra homomorphisms.

Let \( (u,x), (v,y) \in B(C_p) \). Then we compute
\begin{align*}
\tau_{f_{C_p}}([(u,x),(v,y)]_{B_{f_{C_p}, f_e}}) 
&= \tau_{f_{C_p}}(xv - yu + f_{C_p}(x,y), [x,y]_L) \\
&= i_{C_p}(xv) - i_{C_p}(yu) + i_{C_p}(f_{C_p}(x,y)) + s_{C_p}([x,y]_L) \\
&= [s_{C_p}(x), i_{C_p}(v)]_B + [i_{C_p}(u), s_{C_p}(y)]_B + [s_{C_p}(x), s_{C_p}(y)]_B.
\end{align*}

On the other hand, we have
\begin{align*}
[\tau_{f_{C_p}}(u,x), \tau_{f_{C_p}}(v,y)]_B 
&= [(i_{C_p}(u) + s_{C_p}(x)), (i_{C_p}(v) + s_{C_p}(y))]_B \\
&= [i_{C_p}(u), i_{C_p}(v)]_B + [s_{C_p}(x), i_{C_p}(v)]_B + [i_{C_p}(u), s_{C_p}(y)]_B + [s_{C_p}(x), s_{C_p}(y)]_B \\
&= [s_{C_p}(x), i_{C_p}(v)]_B + [i_{C_p}(u), s_{C_p}(y)]_B + [s_{C_p}(x), s_{C_p}(y)]_B.
\end{align*}

Thus, \( \tau_{f_{C_p}} \) is a Lie homomorphism.

Similarly, let \( (m,a), (n,b) \in B(e) \). Then
\begin{align*}
\tau_{f_e}([(m,a),(n,b)]_{B_{f_{C_p}, f_e}})
&= \tau_{f_e}(an - bm + f_e(a,b), [a,b]_L) \\
&= i_e(an) - i_e(bm) + i_e(f_e(a,b)) + s_e([a,b]_L) \\
&= [s_e(a), i_e(n)]_B + [i_e(m), s_e(b)]_B + [s_e(a), s_e(b)]_B.
\end{align*}

And
\begin{align*}
[\tau_{f_e}(m,a), \tau_{f_e}(n,b)]_B 
&= [(i_e(m) + s_e(a)), (i_e(n) + s_e(b))]_B \\
&= [i_e(m), i_e(n)]_B + [s_e(a), i_e(n)]_B + [i_e(m), s_e(b)]_B + [s_e(a), s_e(b)]_B \\
&= [s_e(a), i_e(n)]_B + [i_e(m), s_e(b)]_B + [s_e(a), s_e(b)]_B.
\end{align*}

Therefore, \( \tau_{f_{C_p}, f_e} \) is a Lie algebra homomorphism.

\medskip

Hence, the theorem is proved.

\end{proof}

 \begin{center}
 {\bf ACKNOWLEDGEMENT}
 \end{center}
 
This research is sponsored and supported by the Core Research Grant (CRG) of the Anusandhan National Research Foundation (ANRF), formerly the Science and Engineering Research Board (SERB), under the Department of Science and Technology (DST), Government of India (Grant Number: CRG/2022/005332). All authors gratefully acknowledge the project grant received from the aforementioned agency.

\end{document}